\newtheorem{theorem}{Theorem}
\newtheorem{example}{Example}
\newtheorem{lemma}[theorem]{Lemma}
\newtheorem{proposition}[theorem]{Proposition}
\newtheorem{remark}[theorem]{Remark}
\renewcommand{\int}{\mathsf{int}}
\newcommand{\sep}{\mathsf{sep}}
\newcommand{\fsep}{\mathsf{fsep}}
\title{Choosability with Separation of Cycles and Outerplanar Graphs}
\author{Jean-Christophe Godin $^a$ \and Olivier Togni $^b$}
\begin{document}

\maketitle
\begin{center}
$^a$ Institut de Math\'ematiques de Toulon, Universit\'e de Toulon, France\\
\texttt{godinjeanchri@yahoo.fr}
\medskip

$^b$  Laboratoire LIB, Université Bourgogne Franche-Comté, France\\
\texttt{olivier.togni@u-bourgogne.fr}
\medskip
\end{center}

\begin{abstract}
We consider the following list coloring with separation problem of graphs: Given a graph $G$ and integers $a,b$, find the largest integer $c$ such that for any list assignment $L$ of $G$ with $|L(v)|\le a$ for any vertex $v$ and $|L(u)\cap L(v)|\le c$ for any edge $uv$ of $G$, there exists an assignment $\varphi$ of sets of integers to the vertices of $G$ such that $\varphi(u)\subset L(u)$ and $|\varphi(v)|=b$ for any vertex $v$ and $\varphi(u)\cap \varphi(v)=\emptyset$ for any edge $uv$. Such a value of $c$ is called the separation number of $(G,a,b)$. We also study the variant called the free-separation number which is defined analogously but assuming that one arbitrary vertex is precolored. We determine the separation number and free-separation number of the cycle and derive from them the free-separation number of a cactus. We also present a lower bound for the separation and free-separation numbers of outerplanar graphs of girth $g\ge 5$.
\end{abstract}

\section{Introduction}

Let $a,b,c$ and $k$ be integers and let $G$ be a graph. A $k$-list assignment $L$ of $G$ is a function which associates to each vertex a set of at most $k$ integers. The list assignment $L$ is {\em $c$-separating} if for any $uv\in E(G)$, $|L(u)\cap L(v)|\le c$. The graph $G$ is {\em $(a,b,c)$-choosable} if for any $c$-separating $a$-list assignment $L$, there exists an $(L,b)$-coloring of $G$, i.e. a coloring function $\varphi$ on the vertices of $G$ that assigns to each vertex $v$ a subset of $b$ elements from $L(v)$ in such a way that $\varphi(u)\cap \varphi(v)=\emptyset$ for any $uv\in E(G)$.

This type of restricted list coloring problem, called choosability with separation, has been introduced by Kratochvíl, Tuza and Voigt~\cite{KTV98a}. Notice that Kratochvíl et al.~\cite{KTV98a, KTV98} defined $(a,b,c)$-choosability a bit differently, requiring for a $c$-separating $a$-list assignment $L$ that the lists of two adjacent vertices $u$ and $v$ satisfy $|L(u)\cap L(v)|\le a-c$. Among the first results on the topic, a complexity dichotomy was presented~\cite{KTV98a} and general properties given~\cite{KTV98}.  Since then, a number of papers has considered choosability with separation of planar graphs, mainly for the case $b=1$~\cite{BCD+, CLW18, CYR+, CLS16, CFWW, KL15, Skr01}. A still open question for this class of graph is whether all planar graphs are $(4,1,2)$-choosable or not. Other recent papers concern balanced complete multipartite graphs and $k$-uniform hypergraphs (for the case $b=1$)~\cite{FKK}; bipartite graphs (for the case $b=c=1$)~\cite{EKT19} and a study with an extended separation condition~\cite{KMS}.

In this paper, we concentrate on choosability and free-choosability with separation of cycles and outerplanar graphs in a little different point of view: as a $(a,b,c)$-choosable graph is also $(a,b,c')$-choosable for any $c'<c$, our aim is to determine, for given $a,b$, $a\ge b$, the largest $c$ such that $G$ is $(a,b,c)$-choosable. We find convenient to define the parameter $\sep(G,a,b)$ that we call the {\em  (list) separation number} of $G$ as
\[\sep(G,a,b)=\max\{c, G \text{ is } (a,b,c)\text{-choosable}\}.\]

The notion of free choosability~\cite{AGT16}, that consists in considering list assignments on graphs with a precolored vertex, easily extends to choosability with separation: a graph $G$ is {\em $(a,b,c)$-free-choosable} if for any $c$-separating $a$-list assignment $L$, any $v\in V(G)$ and any $C\subset L(v)$ with $|C|=b$, there exists an $(L,b)$-coloring $\varphi$ such that $\varphi(v)=C$. Alternatively, we can view free-choosability as classical choosability but with a list of cardinality $b$ on one arbitrary vertex. Analogously with the separation number, we define the {\em free-separation number} $\fsep(G,a,b)$ of a graph $G$ as 
\[\fsep(G,a,b)=\max\{c, G \text{ is } (a,b,c)\text{-free-choosable}\}.\]


Clearly, for any graph $G$ and any integers $a$ and $b$, we have $\sep(G,a,b)\ge \fsep(G,a,b)$. Moreover, since for any $a\ge b\ge 1$, every graph $G$ is $(a,b,0)$-free-choosable, we have $$0\le \fsep(G,a,b)\le \sep(G,a,b) \le a$$ and thus both parameters are well defined for any graph. As a first example (and as we will prove in Proposition~\ref{prop:sepCact}), the graph $G$ depicted in Figure~\ref{fig:2C4} is not $(2,1,1)$-choosable, thus implying that $\sep(G,2,1)=\fsep(G,2,1)=0 < \sep(C_4,2,1)=1$.
%

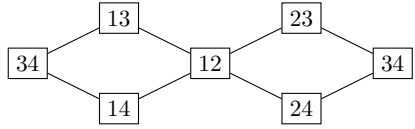
\begin{figure}[t]
\begin{center}
\begin{tikzpicture}[scale=1.2]
 \node at (0,0)[draw=black,scale=0.8](a1){$34$};
 \node at (1,0.5)[draw=black,scale=0.8](a2){$13$};
 \node at (1,-0.5)[draw=black,scale=0.8](a3){$14$};
 \node at (2,0)[draw=black,scale=0.8](x){$12$};
 \node at (3,0.5)[draw=black,scale=0.8](b2){$23$};
 \node at (3,-0.5)[draw=black,scale=0.8](b3){$24$};
 \node at (4,0)[draw=black,scale=0.8](b1){$34$};
 \draw (a1) -- (a2) -- (x) -- (a3) -- (a1);
 \draw (b1) -- (b2) -- (x) -- (b3) -- (b1);
\end{tikzpicture}
\end{center}
\caption{\label{fig:2C4}A cactus with a $1$-separating $2$-list assignment $L$ for which no $(L,1)$-coloring exists.}
\end{figure}

In this paper, we determine the separation number of the cycle in Section~\ref{sec:sepCn} (Theorem~\ref{th:Cn}) and free-separation number (Theorem~\ref{thm:fCn} and Proposition~\ref{prop:fC3}) of the cycle in Section~\ref{sec:fsepCn}. Contrary to the separation number, we show that the free-separation number of a cycle does not depend on the parity of its length and that $C_3$ is a special case. We then use these results to determine bounds and exact values for the same invariants on outerplanar graphs of girth at least 5 and tighter bounds for the subclass of cactuses in Section~\ref{sec:fsepCact}. Some possible directions for further works are given in Section 5.

Our proofs are all constructive and the proofs of upper bounds on $\sep$ and $\fsep$ rely on finding counter-example list assignments. These examples are constructed in a greedy way, maximizing for each list, the intersections with lists of other adjacent vertices (while satisfying the $c$-separating condition). Our proofs for $\fsep(C_n,a,b)$ use special types of list assignments of the path (Lemmas~\ref{lem:Pn+1} and~\ref{lem:Pn+1onlyif}) that may be of interest for obtaining other choosability results.

\section{Separation number of the cycle}
\label{sec:sepCn}
Using a similar argument than the one used by Kratochv\'il et al.~\cite{KTV98} (in the more general setting of graphs with bounded outdegree orientation) we have the following:

\begin{proposition}\label{prop:Cn}
For any $n\ge 3$ and $a\ge b$, we have $\sep(C_n,a,b)\ge a-b$.
\end{proposition}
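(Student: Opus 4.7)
The plan is to construct an $(L,b)$-coloring greedily for any given $(a-b)$-separating $a$-list assignment $L$ of $C_n$, breaking the cycle at one edge by a careful initial choice. As is standard in such arguments, I would first reduce to the case $|L(v)|=a$ for every vertex (one may pad any shorter list with fresh integers used nowhere else without affecting the separation constraint, and the analysis below produces a coloring that uses only original colors since the padding integers never enter the greedy step).

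Label the vertices cyclically as $v_1,v_2,\ldots,v_n$. The key step is to select $\varphi(v_1)$ so that it is automatically disjoint from whatever will be chosen at $v_n$. Since $|L(v_1)|=a$ and $|L(v_1)\cap L(v_n)|\le a-b$, the set $L(v_1)\setminus L(v_n)$ has size at least $a-(a-b)=b$, so I can pick a $b$-subset $\varphi(v_1)\subset L(v_1)\setminus L(v_n)$. This guarantees $\varphi(v_1)\cap L(v_n)=\emptyset$, and hence $\varphi(v_1)\cap\varphi(v_n)=\emptyset$ no matter what $b$-subset of $L(v_n)$ is eventually assigned to $v_n$.

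Having fixed $\varphi(v_1)$, I process $i=2,3,\ldots,n$ in order, each time choosing an arbitrary $b$-subset $\varphi(v_i)\subset L(v_i)\setminus\varphi(v_{i-1})$. This is always possible: since $\varphi(v_{i-1})\subset L(v_{i-1})$ and $|L(v_{i-1})\cap L(v_i)|\le a-b$, we have $|L(v_i)\cap\varphi(v_{i-1})|\le a-b$, so $|L(v_i)\setminus\varphi(v_{i-1})|\ge a-(a-b)=b$. The consecutive edges $v_{i-1}v_i$ are then properly colored by construction, while the closing edge $v_nv_1$ is covered by the disjointness arranged in the first step.

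The only real obstacle is precisely this closing edge $v_nv_1$, which a naive left-to-right greedy cannot handle; the trick is to invest the $b$-element slack in $L(v_1)\setminus L(v_n)$ up front, which turns the cycle into an effectively linear problem. This is essentially the bounded-outdegree orientation argument of Kratochvíl, Tuza and Voigt, adapted to a cycle by orienting $n-1$ of the edges consistently and handling the last edge by the preselection trick.
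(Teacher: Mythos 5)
Your proof is correct and rests on the same idea as the paper's, namely the inequality $|L(v)\setminus L(u)|\ge a-(a-b)=b$ for adjacent $u,v$. The paper applies this selection uniformly -- every vertex $x$ takes its $b$ colors from $L(x)\setminus L(x^+)$, which settles all edges simultaneously with no sequential sweep -- whereas you apply it only at $v_1$ to neutralize the closing edge and then finish with a standard greedy pass; both variants are valid.
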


\begin{proof} Let $L$ be a $k$-separating $(b+k)$-list assignment. Orient $C_n$ clockwise, with $x^-$ and $x^+$ being the predecessor and successor of vertex $x$, respectively. Since for any $x\in V(C_n)$, $|L(x)\cap L(x^+)|\le k$, we have $|L(x)\setminus L(x^+)|\ge b+k-k= b$. Hence it is possible to assign a set $\varphi(x)$ of $b$ colors from $L(x)\setminus L(x^+)$ to each vertex $x$. 

Since any $(a,b,c)$-choosable graph is also $(a',b,c)$-choosable for any $a'\ge a$, we obtain that $C_n$ is $(a,b,c)$-choosable when $c\le a-b$.
\end{proof}

Since $a\ge b$, we can rewrite the above inequality as $\sep(C_n,b+k,b)\ge k$ for any $k\ge 0$.

As the next result shows, the above result is tight provided $k<b$.
\begin{proposition}\label{prop:CnLower}
For any $n\ge 3, b\ge 1$ and $k< b$, we have $\sep(C_{n}, b+k,b)\le k$.
\end{proposition}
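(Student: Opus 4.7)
The plan is to exhibit, for every $n\ge 3$, a $(k+1)$-separating $(b+k)$-list assignment $L$ on $C_n = v_1 v_2 \cdots v_n v_1$ admitting no $(L,b)$-coloring, which establishes $\sep(C_n,b+k,b)\le k$. I would build $L$ via a ``shared-set plus private-set'' decomposition: for each edge $v_i v_{i+1}$ (indices cyclic mod $n$) reserve a set $S_i$, and for each vertex $v_i$ a set $P_i$, all pairwise disjoint subsets of $\mathbb{Z}$; then set $L(v_i) = S_{i-1} \cup P_i \cup S_i$. In the principal case $b \ge k+2$, take $|S_i| = k+1$ and $|P_i| = b-k-2 \ge 0$. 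Then $|L(v_i)| = b+k$ and $L(v_i) \cap L(v_{i+1}) = S_i$ has size $k+1$, so $L$ is $(k+1)$-separating.

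Non-colorability would follow by a double count. Suppose $\varphi$ is an $(L,b)$-coloring. For each edge index $j$ set $\phi_j = \varphi(v_j) \cap S_j$ and $\psi_j = \varphi(v_{j+1}) \cap S_j$; these are disjoint subsets of $S_j$, so $|\phi_j| + |\psi_j| \le |S_j| = k+1$, whence $\sum_j (|\phi_j| + |\psi_j|) \le n(k+1)$. On the other hand, $b = |\varphi(v_i)| = |\psi_{i-1}| + |\varphi(v_i) \cap P_i| + |\phi_i|$ together with $|\varphi(v_i) \cap P_i| \le b-k-2$ gives $|\psi_{i-1}| + |\phi_i| \ge k+2$; summing over $i$ yields $\sum_j (|\phi_j| + |\psi_j|) \ge n(k+2)$, contradicting the upper bound.

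The main obstacle is the borderline case $b = k+1$, where the private parts would have size $-1$. There I would allow consecutive shared sets to overlap in a single element $t_i \in S_{i-1} \cap S_i$ for $2 \le i \le n-1$, and shrink the remaining shared set to $|S_n| = k$ with $S_n$ disjoint from $S_1$ and $S_{n-1}$. The resulting lists $L(v_i) = S_{i-1} \cup S_i$ still have size $2k+1 = b+k$, and $|L(v_i)\cap L(v_{i+1})| = |S_i| \le k+1$. Running the same double count now produces $\sum_j (|\phi_j| + |\psi_j|) \ge nb$ (the overlap indicators $[t_i \in \varphi(v_i)] \ge 0$ contribute nonnegatively to the identity $b = |\psi_{i-1}| + |\phi_i| - [t_i\in\varphi(v_i)]$ for interior $i$) while $\sum_j |S_j| = (n-1)(k+1) + k = nb - 1$, again forcing a strict contradiction. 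The subtle step is checking that shrinking exactly one shared set (and only one) is enough to tip the otherwise-tight double-counting identity into an inequality.
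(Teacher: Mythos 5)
Your proposal is correct: both constructions are well defined, the separating and size conditions check out, and the two double counts do yield the contradictions you claim (I verified the borderline case $b=k+1$, including the degenerate situation $k=0$ where all of $S_1,\dots,S_{n-1}$ collapse to a single color; the count $\sum_j|S_j|=(n-1)(k+1)+k=nb-1$ against the vertex-side lower bound $nb$ goes through). The paper follows the same meta-strategy --- an explicit $(k+1)$-separating list assignment killed by a counting argument --- but with a different construction that avoids your case split: it puts one \emph{universal} color $C$ (with $|C|=1$) into every list, gives each edge a private shared set $D_i$ of size $k$ and each vertex a private set $F_i$ of size $b-k-1\ge 0$, so that adjacent lists meet in $C\cup D_{i+1}$ of size $k+1$. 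The count is then per color rather than per edge: the universal color serves at most $\lfloor n/2\rfloor$ vertices (an independent set of $C_n$), every other color at most one, giving $\lfloor n/2\rfloor+n(b-1)<nb$ in all cases $k<b$ at once. Your version trades that universal color for edge sets of full size $k+1$, which forces the private sets down to $b-k-2$ and hence the separate treatment of $b=k+1$ via overlapping consecutive edge sets; what you gain is a purely local vertex/edge double count that never invokes the independence number of the cycle. Both arguments are sound; the paper's is slightly more uniform, yours is slightly more elementary in its counting.
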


\begin{proof}
We provide a  $(k+1)$-separating $(b+k)$-list assignment $L$ for which no $(L,b)$-coloring of $C_{n}$ exists. Let $X$ be a set of $n(b-1)+1$ colors and let $C, D_i, F_i$, $i\in \{0,\ldots, n-1\}$ be a partition of $X$ with $|C|=1, |D_i|=k, |F_i|=b-k-1$. Let $C_{n}=(x_0,\ldots, x_{n-1})$ and for any $i, 0\le i\le n-1$, let 

$$L(x_i)=C\cup D_i\cup D_{i+1}\cup F_i,$$
with indices taken modulo $n$.

Now, observe that, by the construction of the list assignment $L$, the color of $C$ is present in the color-list of every vertex and that every color of any set $D_i$ is present in the lists of two consecutive vertices. Therefore, the color of $C$ can be assigned to at most $\lfloor n/2\rfloor$ vertices while every color of every set $D_i$, $i=0,\ldots,n-1$ can be given to at most one vertex. Hence, the total number of colors that can be given to vertices of $C_{n}$ is $\lfloor n/2\rfloor+nk + n(b-k-1)= \lfloor n/2\rfloor+ n(b-1)<nb$. Then, since the $n$ vertices of $C_{n}$ require $nb$ colors in total, no $(L,b)$-coloring of $C_{n}$ exists.
\end{proof}

Reusing the method of the proof of Proposition~\ref{prop:Cn} with a little more involved argument, we are able to prove:
\begin{proposition}\label{prop:Cn+2k}
For any $a,b,c$, $n\ge 3$ and $k\ge 1$, the following implication is true for the cycle $C_n$:
 $$C_n \ (a,b,c)\text{-choosable} \Rightarrow C_n \ (a+2k,b+k,c+k)\text{-choosable}.$$
\end{proposition}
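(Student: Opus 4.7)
The plan is to argue by induction on $k$. The base case $k=0$ is tautological, and iterating the single-step implication $(a,b,c)\text{-choosable}\Rightarrow(a+2,b+1,c+1)\text{-choosable}$ a total of $k$ times yields the general statement. So it suffices to prove this single step.

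Let $L$ be a $(c+1)$-separating $(a+2)$-list assignment of $C_n$. Following the approach of Proposition~\ref{prop:Cn}, I would orient the cycle clockwise and, at each vertex $x$, pick a pre-color $\alpha(x)\in L(x)\setminus L(x^+)$. This set has size at least $(a+2)-(c+1)=a+1-c\ge 1$, assuming $c\le a$ (otherwise the $c$-separating condition is vacuous and the statement reduces to ordinary $a$-choosability). The condition $\alpha(x)\notin L(x^+)$ both makes $\alpha$ a proper coloring and prevents any coloring of $x^+$ drawn from a subset of $L(x^+)$ from containing $\alpha(x)$. I would then form residual lists $L''(x)\subseteq L(x)\setminus\{\alpha(x)\}$ of size exactly $a$ by additionally discarding one element $\gamma(x)$ from each $L(x)$, and apply the $(a,b,c)$-choosability hypothesis to produce a proper $b$-coloring $\varphi'$ with $\varphi'(x)\subseteq L''(x)$; the combined assignment $\varphi(x):=\{\alpha(x)\}\cup\varphi'(x)$ would then be the desired $(L,b+1)$-coloring.

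The discards must be chosen so that (i) $\alpha(x^+)\notin L''(x)$ whenever $\alpha(x^+)\in L(x)$, so that $\varphi'(x)$ cannot reuse $\alpha(x^+)$, and (ii) on every ``critical'' edge $(x,x^+)$ with $|L(x)\cap L(x^+)|=c+1$ at least one of $\gamma(x),\gamma(x^+)$ lies in $L(x)\cap L(x^+)$, so that the residual intersection has size at most $c$ (neither $\alpha(x)$ nor $\alpha(x^+)$ lies in the intersection by construction). The main obstacle is simultaneously achieving (i) and (ii) using just one discard per vertex: I plan to assign each critical edge to its successor endpoint, which reduces (ii) to $\gamma(x^+)\in(L(x)\cap L(x^+))\setminus\{\alpha(x^+)\}$, a set of size $\ge c\ge 1$, while (i) forces $\gamma(x)=\alpha(x^+)$ whenever $\alpha(x^+)\in L(x)\setminus\{\alpha(x)\}$. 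These two requirements on $\gamma(x)$ clash only in a restricted ``nested'' configuration where $L(x^-)\cap L(x)\subseteq L(x^+)$ on a critical edge, and such nesting cannot persist all the way around a finite cycle, so it can be broken by a local perturbation of $\alpha$ exploiting the slack $a+1-c$ in its choice. The boundary case $c=0$ is covered directly by Proposition~\ref{prop:Cn}, since $(a,b,0)$-choosability forces $b\le a$ and hence $\sep(C_n,a+2,b+1)\ge a+1-b\ge 1$.
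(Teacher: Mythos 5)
Your overall strategy is the paper's: orient the cycle, greedily pre-colour each vertex from $L(x)\setminus L(x^+)$, then trim the lists so that the residual assignment is a $c$-separating list assignment of size at least $a$ and invoke the hypothesis. The reduction to the single step $(a,b,c)\Rightarrow(a+2,b+1,c+1)$ by induction is also fine (the paper does all $k$ colours in one pass, which changes nothing essential). The gap is the one step you leave unproved: the simultaneous choice of the discards $\gamma(x)$. Because you charge each critical edge $xx^+$ to its successor $x^+$, you need $\gamma(x^+)\in L(x)\cap L(x^+)$, while condition (i) at $x^+$ may independently demand $\gamma(x^+)=\alpha(x^{++})$ with $\alpha(x^{++})\notin L(x)$. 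You assert that this clash arises only in a nested configuration and ``can be broken by a local perturbation of $\alpha$''; neither assertion is justified, and the perturbation is delicate because re-choosing $\alpha$ at one vertex can create new demands at its neighbours, so ``the nesting cannot persist all the way around'' does not by itself produce a valid global choice. As written, the argument is incomplete at its only non-routine point.

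The clash is entirely an artifact of the successor convention, and the paper's proof shows how to make it disappear. Charge the edge $xx^+$ to its \emph{predecessor} $x$: since $\alpha(x^+)\in L(x^+)$ always, whenever $\alpha(x^+)\in L(x)$ it automatically lies in the forward intersection $L(x)\cap L(x^+)$. Hence the single discard $\gamma(x)$, taken in $L(x)\cap L(x^+)$ and equal to $\alpha(x^+)$ whenever $\alpha(x^+)\in L(x)$ (arbitrary in $L(x)\cap L(x^+)$ otherwise, and an arbitrary colour if that intersection is empty), simultaneously fulfils (i) and brings $|L''(x)\cap L''(x^+)|$ down to at most $(c+1)-1=c$, while $|L''(x)|\ge (a+2)-1-1=a$; note also $\gamma(x)\ne\alpha(x)$ since $\gamma(x)\in L(x^+)$ but $\alpha(x)\notin L(x^+)$. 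With this convention the two requirements never conflict and no perturbation is needed. This is exactly the set $S(x)$ in the paper's proof, which is a subset of $I^+(x)=L(x)\cap L(x^+)$ of size $\min\{k,|I^+(x)|\}$ containing $L(x)\cap\varphi(x^+)$.
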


\begin{proof} Suppose $a\ge c$ and $C_n$ is $(a,b,c)$-choosable and let $L$ be a $(c+k)$-separating $(a+2k)$-list assignment of $C_n$. Orient $C_n$ clockwise, with $x^-$ and $x^+$ being the predecessor and successor of vertex $x$, respectively. Since for any $x\in V(C_n)$, $|L(x)\cap L(x^+)|\le c+k$, we have $|L(x)\setminus L(x^+)|\ge a+2k-(c+k)= a-c+k\ge k$. Hence it is possible to assign a set $\varphi(x)$ of $k$ colors from $L(x)\setminus L(x^+)$ to each vertex $x$. 

Now we still have to assign $b$ more colors to each vertex to complete the coloring. For this, we construct a new list assignment $L'$ by removing from $L(x)$ the colors already assigned to $x$ and also a maximum number of colors from $L(x)\cap L(x^+)$, including those assigned to $x^+$ that are in $L(x)$, if any, in order $L'$ to be a $c$-separating $a'$-list assignment, with $a'\ge a$ (observe that, by construction, $L(x)\cap \varphi(x^-)=\emptyset$):
For any $x\in V(C_n)$, let $I^+(x)=L(x)\cap L(x^+)$ and let $S(x)$ be any subset of $I^+(x)$ of size $\min\{k, |I^+(x)|\}$ that contains $L(x)\cap \varphi(x^+)$. (Note that $I^+(x)$ and $S(x)$ may be empty.) Then, define a new list assignment $L'$ on $C_n$ by: $\forall x\in V(C_n)$,

$$ L'(x)=L(x)\setminus (\varphi(x) \cup S(x)).$$

We then have $|L'(x)|= a+2k -k -\min\{k, |I^+(x)|\} \ge a$ and $|L'(x)\cap L'(x^+)| \le c+k -\min\{k, |I^+(x)|\} \le c$, hence $L'$ is a $c$-separating $a'$-list assignment with $a'\ge a$. Therefore, since by hypothesis $G$ is $(a,b,c)$-choosable, there exists an $(L',b)$-coloring of $G$, which together with the coloring $f$, produces an $(L,b+k)$-coloring of $G$, proving that $G$ is $(a+2k,b+k,c+k)$-choosable.
\end{proof}

For cycles of odd length, combining known choosability results on the cycle with Proposition~\ref{prop:Cn+2k} allows to determine the separation number in the remaining cases:
\begin{proposition}\label{prop:C2p+1}
For any integers $b, p$ and $\alpha$ with $0\le \alpha \le \frac{b}{p}$ and $p\ge 1$, we have 
\[ \sep(C_{2p+1},2b+\alpha,b)=b+(p+1)\alpha. \]
\end{proposition}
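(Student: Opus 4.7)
The plan is to prove the two matching inequalities $\sep(C_{2p+1}, 2b+\alpha, b) \ge b + (p+1)\alpha$ and $\sep(C_{2p+1}, 2b+\alpha, b) \le b + (p+1)\alpha$. For the lower bound, I will invoke a classical $(a,b)$-choosability result for odd cycles: $C_{2p+1}$ is $(2b_0+\alpha, b_0)$-choosable whenever $b_0 \le p\alpha$ (equivalently, the $b$-fold list chromatic number of $C_{2p+1}$ equals $2b+\lceil b/p\rceil$, reflecting the fractional chromatic number $2+1/p$). Since $(a, b_0)$-choosability coincides with $(a, b_0, a)$-choosability, taking $b_0 = p\alpha$—permitted because the hypothesis $\alpha \le b/p$ ensures $k := b - p\alpha \ge 0$—tells us that $C_{2p+1}$ is $((2p+1)\alpha,\, p\alpha,\, (2p+1)\alpha)$-choosable. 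Feeding this into Proposition~\ref{prop:Cn+2k} with this value of $k$ lands exactly on $(2b+\alpha,\, b,\, b+(p+1)\alpha)$-choosability, giving the lower bound.

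For the matching upper bound, the trivial inequality $\sep(G, a, b) \le a$ handles the boundary case $\alpha = b/p$ (which forces $p \mid b$), since then $b+(p+1)\alpha = 2b+\alpha = a$. So I may assume $\alpha < b/p$, i.e.\ $b \ge p\alpha + 1$, and exhibit a bad list assignment in the spirit of Proposition~\ref{prop:CnLower}. With $n = 2p+1$, I take pairwise disjoint color sets $C$ with $|C| = (2p+1)\alpha + 2$ and $D_0, \ldots, D_{n-1}$ with $|D_i| = b - p\alpha - 1$, and set $L(x_i) = C \cup D_i \cup D_{i+1}$ with indices modulo $n$. A direct count gives $|L(x_i)| = 2b+\alpha$ and $|L(x_i) \cap L(x_{i+1})| = b+(p+1)\alpha+1$, so $L$ is a $(b+(p+1)\alpha+1)$-separating $(2b+\alpha)$-list assignment.

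To show $L$ admits no $(L,b)$-coloring I would double-count color-slots using that the independence number of $C_{2p+1}$ equals $p$: each color of $C$ lies in every list and can therefore be assigned to at most $p$ vertices, while each color of $D_i$ lies only in the two adjacent lists $L(x_{i-1})$ and $L(x_i)$ and so to at most one vertex. Summing over all colors, the total number of slots fillable is at most
\[ p\bigl((2p+1)\alpha + 2\bigr) + n(b - p\alpha - 1) \;=\; nb - 1 \;<\; nb, \]
contradicting $\sum_i |\varphi(x_i)| = nb$ required of any $(L,b)$-coloring.

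The main obstacle is pinning down the list parameters: the three requirements—prescribed list size $2b+\alpha$, prescribed pairwise intersection $b+(p+1)\alpha+1$, and a strict deficit in the counting bound—almost uniquely force $|C| = (2p+1)\alpha+2$ and $|D_i| = b-p\alpha-1$, and make any private piece collapse to zero; the hypothesis $\alpha \le b/p$ is precisely what keeps $|D_i| \ge 0$ in the non-degenerate regime. Matching the parameters of the cited odd-cycle choosability result with Proposition~\ref{prop:Cn+2k} so as to land exactly on the target triple $(2b+\alpha,\, b,\, b+(p+1)\alpha)$ is the other check to do carefully.
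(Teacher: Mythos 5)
Your proposal is correct and follows essentially the same route as the paper: the lower bound via the classical $(a,b,a)$-choosability of odd cycles combined with Proposition~\ref{prop:Cn+2k} at $k=b-p\alpha$, and the upper bound via the identical list assignment $L(x_i)=C\cup D_i\cup D_{i+1}$ with $|C|=(2p+1)\alpha+2$, $|D_i|=b-p\alpha-1$ and the same counting argument yielding $nb-1<nb$. The boundary case $\alpha=b/p$ is also handled exactly as in the paper.
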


\begin{proof}
It is known that cycles of length $2p+1$ are $(a,b,a)$-choosable if and only if $a/b\ge 2+1/p$ (see~\cite{AGT16}). Hence, for $\alpha\ge 0$, $C_{2p+1}$ is $((2p+1)\alpha,p\alpha,(2p+1)\alpha)$-choosable. Then by Proposition~\ref{prop:Cn+2k}, $C_{2p+1}$ is also $((2p+1)\alpha+2k,p\alpha+k,(2p+1)\alpha+k)$-choosable for any $k\ge 0$. Setting $k=b-p\alpha$, we obtain that $C_{2p+1}$ is $(2b+\alpha, b,b+(p+1)\alpha)$-choosable.

Note that if $p$ divides $b$ and if $\alpha=b/p$, then $b+(p+1)\alpha = 2b+\alpha$. Hence in this case, $\sep(C_{2p+1},2b+\alpha,b)=2b+\alpha$. Otherwise, to prove that $C_{2p+1}$ is not $(2b+\alpha, b,b+(p+1)\alpha+1)$-choosable for $\alpha \le \frac{b-1}{p}$, we provide a $(b+(p+1)\alpha+1)$-separating $(2b+\alpha)$-list assignment $L$ of $C_{2p+1}$ for which no $(L,b)$-coloring exists. Let $C$ be a set of $(2p+1)\alpha + 2$ colors and $D_i, i=0,\ldots, 2p$ be $2p+1$ pairwise disjoint sets of $b-p\alpha-1$ colors (also disjoint from $C$). Let  $C_{2p+1}=(x_0,\ldots, x_{2p})$ and set $$L(x_i)= C \cup D_i\cup D_{i+1},$$ with $0\le i\le 2p$ and indices taken modulo $2p+1$. 

It can be checked that the lists of any two consecutive vertices have $(2p+1)\alpha +2+b-p\alpha-1=b+(p+1)\alpha+1$ elements in common. Assume now that there exists an $(L,b)$-coloring of $C_{2p+1}$. Observe that, by the construction of the list assignment $L$, every color from $C$ is present in the color-list of every vertex and that every color of any set $D_i$ is present in the lists of two consecutive vertices. Therefore, every color of $C$ can be assigned to at most $p$ vertices while every color of every set $D_i$, $i=0,\ldots,2p$ can be given to at most one vertex. Hence we have $p((2p+1)\alpha+2) + (2p+1)(b-p\alpha-1)=b(2p+1)-1$ available colors in total but we have to assign $b(2p+1)$ colors to the vertices, a contradiction.
\end{proof}

We are now ready to determine the separation number of the cycle.
\begin{theorem}\label{th:Cn}
For any $p\ge 1$ and any $a,b$ such that $a\ge b\ge 1$, 
 \[\sep(C_{2p+2},a,b) = \left\{\begin{array}{ll}
a-b, & b\le a< 2b\\
a, & a\ge 2b.\\ 
                            \end{array}
\right.\]

\[ \sep(C_{2p+1},a,b) = \left\{\begin{array}{ll}
a-b, & b\le a< 2b\\
b+(p+1)(a-2b), & 2b \le a \le 2b + \frac{b}{p}\\
a, &  a \ge 2b + \frac{b}{p}. 
                            \end{array}
\right.\]
\end{theorem}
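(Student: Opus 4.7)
The plan is to assemble the theorem by a case analysis in $a$ (with the parity of the cycle fixed), applying the appropriate earlier proposition in each regime, together with the fractional-choosability threshold for cycles that is already invoked in the proof of Proposition~\ref{prop:C2p+1}.

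For both parities, when $b \le a < 2b$ I would set $k = a - b$, so that $0 \le k < b$. Proposition~\ref{prop:Cn} gives the lower bound $\sep(C_n, a, b) \ge a - b$, and Proposition~\ref{prop:CnLower}, which applies precisely because $k < b$, gives the matching upper bound. This settles the first line of both formulas simultaneously.

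Next, for the even cycle $C_{2p+2}$ with $a \ge 2b$, I would use the classical fact that $C_{2p+2}$ is $(a,b)$-choosable for any $a \ge 2b$ (the even-cycle analogue of the threshold cited from~\cite{AGT16} in the proof of Proposition~\ref{prop:C2p+1}). Since every $a$-list assignment is trivially $a$-separating, this is equivalent to $(a,b,a)$-choosability and yields $\sep(C_{2p+2}, a, b) \ge a$, matching the trivial bound $\sep \le a$. For the odd cycle $C_{2p+1}$, the range $2b \le a \le 2b + b/p$ follows directly from Proposition~\ref{prop:C2p+1} applied with $\alpha = a - 2b$, and for $a \ge 2b + b/p$ I would use the characterization that $C_{2p+1}$ is $(a,b)$-choosable iff $a/b \ge 2 + 1/p$; this again gives $(a,b,a)$-choosability and hence $\sep(C_{2p+1}, a, b) = a$.

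The nontrivial work has already been carried out in Proposition~\ref{prop:C2p+1}, whose explicit counter-example list assignment supplies the tight upper bound in the intermediate odd-cycle regime. What remains is the routine bookkeeping of checking that the three (respectively two) regimes together exhaust all pairs $(a,b)$ with $a \ge b$, and that the values agree at the transition points $a = 2b$ and $a = 2b + b/p$; in particular at $a = 2b$ one has $b + (p+1)(a-2b) = b$ and $a - b = b$, while at $a = 2b + b/p$ one has $b + (p+1)(a-2b) = 2b + b/p = a$.
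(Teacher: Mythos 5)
Your proposal is correct and follows essentially the same route as the paper: the first regime for both parities from Propositions~\ref{prop:Cn} and~\ref{prop:CnLower} with $k=a-b$, the even case for $a\ge 2b$ from the classical $(2b,b)$-choosability of even cycles, and the odd intermediate and large regimes from Proposition~\ref{prop:C2p+1} (with $\alpha=a-2b$) and the threshold $a/b\ge 2+1/p$. Your additional check of consistency at the transition points $a=2b$ and $a=2b+b/p$ is a welcome extra that the paper leaves implicit.
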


\begin{proof}
 For even-length cycles, the result is obtained by combining Propositions~\ref{prop:Cn} and~\ref{prop:CnLower} and noting that $C_{2p}$ is $(2b,b,2b)$-choosable.
 
 For odd-length cycles, the combination of Propositions~\ref{prop:Cn}, ~\ref{prop:CnLower} and~\ref{prop:C2p+1} and the known fact that $C_{2p+1}$ is $(a,b,a)$-choosable for any $a,b$ such that $a/b\ge 2 + 1/p$ leads to the result.
\end{proof}

\begin{example}
 From the above theorem, we know that the cycle $C_3$ is $(5,2,4)$-choosable but not $(5,2,5)$-choosable (this was already known) and also $(7,3,5)$-choosable but not $(7,3,6)$-choosable (while it was only known before that $C_3$ is not $(7,3,7)$-choosable). In contrast, $C_5$ is $(7,3,6)$-choosable and $(9,4,7)$-choosable but not $(7,3,7)$-choosable and not $(9,4,8)$-choosable. 
\end{example}

\section{Free separation number of the cycle}
\label{sec:fsepCn}

In order to determine the free-separation number of the cycle, we first set some notation and preliminary results. 

The following Hall-type condition that we call the {\em amplitude condition} is necessary for a graph $G$ to be $(L,b)$-colorable: 
$$\forall H\subset G, \sum_{k\in C} \alpha(H,L,k) \ge b|V(H)|,$$ where $C=\bigcup_{v\in V(H)}L(v)$ and $\alpha(H,L,k)$ is the independence number of the subgraph of $H$ induced by the vertices containing $k$ in their color list. Notice that $H$ can be restricted to be a connected induced subgraph of $G$. As shown by Cropper et al.~\cite{CGHHJ} (in the more general context of weighted list coloring), this condition is also sufficient when the graph is a complete graph or a path (or some other specific graphs).

For a list assignment $L$ on a graph $G$ of order $n$ with vertex set $V(G)=\{x_1,x_2, \ldots, x_n\}$, we let $L_i=L(x_i)$ and for $1\le i<j\le n$, we write $\Sigma_{i,j}(L)=\sum_{k\in C}\alpha(H,L,k)$, where $H$ is the subgraph of $G$ induced by vertices $x_i, x_{i+1},\ldots, x_j$. We also simplify $\Sigma_{1,n}(L)$ to $\Sigma(L)$.

From now on, a cycle of order $n$ will have its vertices denoted by $x_1,x_2,\ldots, x_n$ following some order on the cycle and the vertices of a path of order $n$ will be also denoted by $x_1,x_2,\ldots, x_n$ following the path from one end-vertex to the other.
 We will use the following relation to show that the cycle is $(L,b)$-colorable for some lists $L$. 

\begin{remark}\label{rem}
 A list assignment $L$ on the cycle $C_n$ with $|L_1|=b$ and $|L_i|=a$ for any $i, 2\le i\le n$ can be transformed into a list assignment $L'$ on the path $P_{n+1}$  by setting $L'_{n+1}=L_1$ and $L'_i=L_{i}$ for $1 \le i\le n$ (i.e., $P_{n+1}$ has been obtained by ``cutting'' the cycle on the vertex $x_1$). Clearly if $P_{n+1}$ is $(L',b)$-colorable, then $C_n$ is $(L,b)$-colorable.
\end{remark}

As induced subgraphs of paths are sub-paths, the amplitude condition for a path $P_n$ can be rewritten as : \[ \forall i,j, \ 1\le i\le j\le n,\ \Sigma_{i,j}(L) \ge b(j-i+1). \]

For integers $n,a$ and $b$ with $b\le a$, let 
\[c(n,a,b) = \left\{\begin{array}{lcccl}
\frac{n-1}{n}(a-b), & \mbox{ if } & b&\le &a< \frac{2n-1}{n-1}b\\
\frac{n-1}{n-2}(a-b) - \frac{2}{n-2}b, & \mbox{ if } & \frac{2n-1}{n-1}b &\le &a < 2\frac{n+1}{n}b\\
a, &  \mbox{ if } &2\frac{n+1}{n}b &\le & a.
                            \end{array}\right. \]

We now prove two lemmas about conditions for a path with precolored endvertices to be list colorable.
\begin{lemma}\label{lem:Pn+1}
 Let $n, a,b,c$ be integers, $n\ge 3$, $b\le a < 2\frac{n+1}{n}b$ and $c=\lfloor c(n,a,b)\rfloor$. For any $c$-separating $a$-list assignment $L$ of $P_{n+1}$ such that $|L_1|=|L_{n+1}|=b$, there exists an $(L,b)$-coloring of $P_{n+1}$.
\end{lemma}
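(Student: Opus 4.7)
The plan is to invoke the amplitude-condition characterization of $(L,b)$-colorability for paths, recalled in the excerpt and due to Cropper et al.: $P_{n+1}$ is $(L,b)$-colorable if and only if $\Sigma_{i,j}(L) \geq b(j-i+1)$ holds for every sub-path $x_i,\dots,x_j$. It therefore suffices to verify this family of inequalities under the hypothesis $c \leq \lfloor c(n,a,b)\rfloor$.

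For a sub-path $H$ of length $m$, set $A := \sum_{x_\ell \in H}|L(x_\ell)|$ and use the identity $\alpha(G_k) = |V_k| - \mu(G_k)$ (valid because each $G_k$ is a disjoint union of sub-paths) to rewrite $\Sigma(H) = A - M$, where $M := \sum_k \mu(G_k)$. I would upper bound $M$ by a direct counting argument: letting $t_\ell$ be the number of colors whose matching inside $G_k$ uses the edge $x_\ell x_{\ell+1}$, the separation condition gives $t_\ell \leq c$, while the matching property at each vertex gives $t_{\ell-1} + t_\ell \leq |L(x_\ell)|$ (a color is matched at $x_\ell$ on at most one incident edge). A short argument reducing longer color-runs to length-$2$ pieces then yields $M \leq T := \sum_\ell t_\ell$, so the verification reduces to maximizing $T$ under those two families of constraints and comparing with $A - bm$.

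The proof then splits according to the type of sub-path and, in the case of the full path, according to whether $c \leq b$ or $c > b$. Internal sub-paths (length $s$, all lists of size $a$) are handled by the inequality $(s-1)c \leq s(a-b)$, easily implied by both branches of $c(n,a,b)$. Sub-paths containing exactly one endpoint of $P_{n+1}$ reduce either to $c \leq a-b$ (when $t_1 \leq b$ is non-binding) or to $c \leq ((s-1)(a-b)-b)/(s-2)$; in the lemma's range both conclusions follow from $a < 2\frac{n+1}{n}b$ together with $c \leq \lfloor c(n,a,b)\rfloor$. The binding case is the full path of length $n+1$: if $c \leq b$ the endpoint caps $t_1,t_n\leq b$ are automatic and $T \leq nc$ yields the first formula $c \leq \frac{n-1}{n}(a-b)$; if $c > b$ the caps bind, $T \leq 2b + (n-2)c$, and the amplitude inequality rearranges to the middle formula $c \leq \frac{(n-1)(a-b)-2b}{n-2}$.

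The main obstacle will be twofold. First, one must establish that length-$2$ runs are optimal for the adversary in the matching sum $M$, so that longer runs need not be considered (intuitively, a longer run wastes edge capacity without improving its matching-to-coverage ratio); this is a short convexity argument on $\lceil\cdot/2\rceil$. Second, one must check that the internal capacity constraint $t_{\ell-1}+t_\ell \leq a$ at non-endpoint vertices is non-binding throughout the lemma's range, which reduces to the elementary inequality $2c \leq a$, itself an easy consequence of $c \leq \lfloor c(n,a,b)\rfloor$ combined with $a < 2\frac{n+1}{n}b$. Once both are in place, the case analysis above closes the proof.
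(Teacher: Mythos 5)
Your proposal is correct and rests on the same foundation as the paper's proof: both reduce the lemma to verifying the amplitude condition of Cropper et al.\ on every sub-path, both arrive at the same case split ($c\le b$ versus $c>b$, matching the two branches of $c(n,a,b)$), and both end with the same final inequalities. Where you genuinely differ is in how $\Sigma_{i,j}(L)$ is controlled. The paper uses a one-line telescoping lower bound: since $|L_{j+1}\setminus L_j|\ge a-c$, each added vertex increases $\Sigma$ by at least $a-c$, with the adjustment $\max(b-c,0)$ at a precolored end. You instead compute $\Sigma$ exactly via the Gallai--K\"onig identity $\alpha(G_k)=|V(G_k)|-\mu(G_k)$ (valid because each $G_k$ is a disjoint union of sub-paths) and then bound $M=\sum_k\mu(G_k)$ through edge capacities $t_\ell\le c$ and vertex capacities $t_{\ell-1}+t_\ell\le |L(x_\ell)|$. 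Two remarks on the obstacles you flag: first, $M=\sum_\ell t_\ell$ holds by definition once a maximum matching of each $G_k$ is fixed, so the ``reduction of longer runs to length-$2$ pieces'' is not needed for the upper bound on $M$ (it would only matter for showing the bound is attained, which is the content of Lemma~\ref{lem:Pn+1onlyif}); second, your claim that the internal vertex capacities are slack, i.e.\ $2c\le a$, does check out in both branches --- in the second branch it is precisely equivalent to $a\le 2\frac{n+1}{n}b$. Your route is heavier machinery for the same inequalities, but it yields an exact expression $\Sigma=A-M$ rather than a lower bound, which makes the extremal list assignments used in the converse direction transparent; the paper's telescoping argument is shorter and avoids matchings altogether.
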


\begin{proof}
It is sufficient to verify that the amplitude condition is satisfied. First, we show that in both cases, we have $c\le a-b$. If $c\le \frac{n-1}{n}(a-b)$ then clearly $c\le a-b$. If $c\le \frac{n-1}{n-2}(a-b) - \frac{2}{n-2}b = \frac{n-2}{n-2}(a-b)+ \frac{a-b}{n-2} -\frac{2}{n-2}b= a-b +\frac{a-3b}{n-2}$, then again, $c\le a-b$ since $a < 2 \frac{n+1}{n}b \le 3b$ as soon as $n\ge 3$.

Now, since $L$ is $c$-separating then if $j< n$ we have 
$$|L_{j+1}\setminus L_j|\ge a-c$$
and thus, for any $1\le i\le j$, 
\begin{equation}\label{eq:a-c}
 \Sigma_{i,j+1}(L)\ge \Sigma_{i,j}(L) + a-c
\end{equation}

Moreover, since $|L_{n+1}|=b$, we also have 
\begin{equation}\label{eq:b-c}
 \Sigma_{i,n+1}(L)\ge \Sigma_{i,n}(L) + \max(b-c,0)
\end{equation}

Therefore, if  $1 <  i\le j\le n$, then $\Sigma_{i,j}(L) \ge (j-i+1) (a-c)$. Hence the amplitude condition is satisfied in this case if $(j-i+1) (a-c)\ge (j-i+1)b$, i.e., if $a-c\ge b$, which is true since we have shown above that $c\le a-b$.

If $i=1$ or $j=n+1$, we consider two cases depending on $a$, $b$ and $c$ (note that the ratio $\frac{2n-1}{n-1}$ has been chosen in such a way that $c<b$ in Case 1 and $c\ge b$ in Case 2).

 \begin{description}
  \item[Case 1.] $c=\lfloor \frac{n-1}{n}(a-b)\rfloor$ and $b\le a< \frac{2n-1}{n-1}b$. 
  
The above imply $c\le  \frac{n-1}{n}(a-b) < \frac{n-1}{n}\frac{2n-1}{n-1} b - \frac{n-1}{n}b = b$.
  Hence, if $i=1$ and $2\le j\le n$, then by Equation~\ref{eq:a-c}, $\Sigma_{1,j}(L) \geq b+ (a-c)j  \ge (j+1)b,$ since $a-c\ge b$. Otherwise, if $i>1$ and $j= n+1$, then $\Sigma_{i,n+1}(L) \geq a+ (a-c)(n-i)+b-c = b+(a-c)(n+1-i)$. Hence, since $a-c\ge b$, we have that $\Sigma_{i,n+1}(L) \ge (n+2-i)b$. Finally, since $c\le b$, by \ref{eq:a-c} and~\ref{eq:b-c}, we infer $\Sigma(L) \geq b+ (a-c)(n-1)+b-c  \ge (n+1)b$ if $c\le \frac{n-1}{n}(a-b)$.

  \item[Case 2.] $c= \lfloor\frac{n-1}{n-2}(a-b) - \frac{2}{n-2}b\rfloor$ and $a\ge \frac{2n-1}{n-1}b$.  
  
  In this case we have $c>\frac{n-1}{n-2}(a-b) - \frac{2}{n-2}b -1\ge \frac{n-1}{n-2}\frac{2n-1}{n-1}b - \frac{n+1}{n-2}b-1 = b-1$. Hence, if $i=1$ and $2\le j\le n$, then $\Sigma_{1,j}(L) \geq b+ a-b + (a-c)(j-2) \ge b+ (a-c)(j-1)  \ge jb,$ since $a-c\ge b$. Otherwise, if $i>1$ and $j= n+1$, then $\Sigma_{i,n+1}(L) \geq b+ a-b + (a-c)(n-i) \ge b+(a-c)(n+1-i) \ge (n+2-i)b$ since $a-c\ge b$. Finally, $\Sigma_{1,n+1}(L) \geq \Sigma_{2,n}(L)\ge a+ (a-c)(n-2)  \ge (n+1)b$ since $c\le \frac{n-1}{n-2}(a-b) -\frac{2}{n-2}b$.
 \end{description}
 
 Therefore, in both cases, the amplitude condition is satisfied hence $P_{n+1}$ is $(L,b)$-colorable.
\end{proof}

As shown by the next lemma, the above condition in Lemma~\ref{lem:Pn+1} is also necessary (provided that $n\ge 4$) and even with restricted list assignments in which the lists of the two endvertices are the same or are disjoint.

\begin{lemma}\label{lem:Pn+1onlyif}
Let $n, a,b,c$ be integers, $n\ge 4$, $b\le a < 2\frac{n+1}{n}b$ and  $c=\lfloor c(n,a,b)\rfloor +1$. There exists a $c$-separating $a$-list assignment $L$ of $P_{n+1}$ with $|L_1|=|L_{n+1}|=b$
such that $P_{n+1}$ is not $(L,b)$-colorable. Moreover, the same holds if in addition, the list assignment $L$ is such that $L_1=L_{n+1}$ or $L_1\cap L_{n+1}=\emptyset$.
\end{lemma}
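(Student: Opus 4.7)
The plan is to exhibit, for each regime defining $c(n,a,b)$, a $c$-separating $a$-list assignment $L$ on $P_{n+1}$ with $|L_1|=|L_{n+1}|=b$ whose amplitude sum $\Sigma(L)$ is strictly less than $b(n+1)$; this violates the amplitude condition on the full path and so rules out any $(L,b)$-coloring, since the condition is sufficient on paths. Each list will be built from pairwise disjoint blocks of colors: a (possibly empty) common set $C\subseteq\bigcap_i L_i$, edge-shared blocks $D_i\subseteq L_i\cap L_{i+1}$, and private blocks $F_i\subseteq L_i$, so that $|L_i\cap L_{i+1}|=|C|+|D_i|$ is transparent and each color contributes $\lceil(n+1)/2\rceil$ to $\Sigma(L)$ if it lies in $C$ and $1$ otherwise.

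For Case 1 ($c\le b$), I will first try $C=\emptyset$ with $|D_i|=c$, $|F_1|=|F_{n+1}|=b-c$, and $|F_i|=a-2c$ internally. This gives $\Sigma(L)=(n-1)a+2b-nc$, and the defining bound $nc>(n-1)(a-b)$ forces $\Sigma(L)<b(n+1)$. When $2c>a$ the internal $|F_i|$ becomes negative, so I will switch to $|C|=2c-a$, $|D_i|=a-c$, $|F_i|=0$ internally, with the same endpoint $|F_1|,|F_{n+1}|$; the common block now contributes $(2c-a)\lceil(n+1)/2\rceil$ to $\Sigma$, and a short computation using the Case 1 ceiling $a<\tfrac{2n-1}{n-1}b$ again yields $\Sigma(L)<b(n+1)$.

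For Case 2 ($c>b$), the endpoint constraint $|L_1|=b<c$ forces $|D_1|\le b$. I will take $C=\emptyset$, $|D_1|=|D_n|=b$, $F_1=F_{n+1}=\emptyset$, and choose the internal $|D_i|$ greedily as $\min(c,\,a-|D_{i-1}|)$ from left to right. The Case 2 bound $a<\tfrac{2(n+1)}{n}b$ together with $n\ge 4$ yields $a<3b$ and then $c\le a-b$, which keeps the greedy choice feasible at both ends. If $2c\le a$ every middle block has size $c$ and $\Sigma(L)=(n-1)a-(n-2)c$, matching the Case 2 threshold exactly; if $2c>a$ the middle sizes alternate between $c$ and $a-c$, giving $\Sigma(L)=\tfrac{n}{2}a$ for $n$ even and $\tfrac{n+1}{2}a-c$ for $n$ odd, and in either subcase the Case 2 bound on $a/b$ delivers $\Sigma(L)<b(n+1)$.

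The additional forms required by the last sentence are obtained by a cosmetic relabeling. Keeping the blocks pairwise disjoint across indices (the default) gives $L_1\cap L_{n+1}=\emptyset$; identifying $D_n$ with $D_1$ and $F_{n+1}$ with $F_1$ yields $L_1=L_{n+1}$ without changing $\Sigma(L)$, because a color in $D_1=D_n$ appears in the two disjoint edges $x_1x_2$ and $x_nx_{n+1}$ of $P_{n+1}$ and hence still contributes $2$ to $\Sigma(L)$, exactly as when the two blocks were counted separately, and similarly for $F_1=F_{n+1}$. I expect the main technical difficulty to be the non-negativity and compatibility verifications for the $|D_i|$ and $|F_i|$ patterns across the case split at $2c=a$, since the Case 2 threshold can straddle $a/2$ and so force the alternating rather than the constant internal choice.
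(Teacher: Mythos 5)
Your construction is essentially the paper's: explicit block-decomposed counterexample lists whose total amplitude $\Sigma(L)$ falls below $b(n+1)$, with the same case split (including $a\ge 2c$ versus $2c>a$ and the parity subdivision in the latter), and indeed your $\Sigma$-values coincide with the paper's in every subcase (e.g.\ $(n-1)a+2b-nc$, $(n-1)a-(n-2)c$, $nc-\frac{n+1}{2}$, $nc-\frac n2$); the only cosmetic difference is that the paper's default has $L_1=L_{n+1}$ and obtains the disjoint variant by recoloring the last two lists, while you default to disjoint blocks and obtain $L_1=L_{n+1}$ by identification. One small point to repair: in your Case~1 subconstruction with a globally common block $C$ of size $2c-a$, the claim that ``keeping blocks disjoint gives $L_1\cap L_{n+1}=\emptyset$'' is false, since $C\subseteq L_1\cap L_{n+1}$; this is harmless only because that subcase is essentially vacuous --- in Case~1 one has $c\le b$ and $c\le a-b$, hence $2c\le a$, except in the degenerate case $a=b=1$, which is trivial --- but you should say so explicitly rather than invoke the $C\ne\emptyset$ construction. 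Also, the non-colorability conclusion uses that the amplitude condition is \emph{necessary} (a counting bound valid for every graph), not that it is sufficient on paths; sufficiency is only needed in Lemma~\ref{lem:Pn+1}.
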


\begin{proof}
We provide a counter-example in each of the two following cases.
\begin{description}
\item [Case 1.] $b\le a < \frac{2n-1}{n-1}b$. 

We show that for $c=\lfloor\frac{n-1}{n}(a-b)\rfloor +1$ the following list assignment $L$ is a $c$-separating $a$-list assignment of $P_{n+1}$ is such that $L_1=L_{n+1}$, $|L_1|=b$, but no $(L,b)$-coloring exists. In order to have a compact representation and to shorten the proof, $L$ is described in a graphical way showing the intersections of sublists composing the lists $L_i$ (each box represent a color subset and the number inside a box indicates its size):  

\begin{center}
 \fbox{ \begin{minipage}{.95\textwidth}
$L_1:\ \ \ $ \framebox(40,10){$b$}
 
$L_2:\ \ \ $ \framebox(25,10){$c$}\hspace*{1cm} \framebox(52,10){$a-c$}

$L_3:\ \ \ $ \hspace*{1.9cm} \framebox(25,10){$c$}\hspace*{1cm} \framebox(52,10){$a-c$}

$\vdots$\hspace*{6.4cm}$\vdots$

$L_{n-1}:\ $ \hspace*{5.8cm} \framebox(25,10){$c$}\hspace*{.5cm} \framebox(52,10){$a-c$}

$L_n:\ \ \ $ \framebox(25,10){$c$}\hspace*{6.6cm} \framebox(25,10){$c$}\hspace*{1.2cm} \framebox(45,10){$a-2c$}

$L_{n+1}:$ \framebox(40,10){$b$}

\end{minipage}}
\end{center}

Since $a< \frac{2n-1}{n-1}b$, then $c=  \lfloor\frac{n-1}{n}(a-b)\rfloor +1 < \lfloor\frac{n-1}{n}\frac{2n-1-n+1}{n-1}b\rfloor +1= b+1$, i.e. $c\le b$. Moreover, since $\frac{n-1}{n} < 1$, we have $c< a-b +1$, hence $a> b+c-1 \ge 2c-1$. Consequently, the list assignment $L$ is well defined.

Observe that each color of each list $L_i$, $2\le i\le n-1$, can be used on only one vertex. Also, for any color $k$ shared by the lists of $L_1$ and $L_{n+1}$, we have $\alpha(P_{n+1},L,k) = 2$. 
Hence, we have 
$$\Sigma(L)= 2b+ (n-2)(a-c)+a-2c = (n-1)(a -c) +2b-c.$$
In order the amplitude condition to be satisfied, we must have $\Sigma(L)\ge (n+1)b$, i.e., $(n-1)(a -c) +2b-c \ge (n+1)b$, which is equivalent to $c \le \frac{n-1}{n}(a-b)$. Consequently, $P_{n+1}$ is not $(L,b)$-colorable when $c> \frac{n-1}{n}(a-b)$.

\item[Case 2.] $\frac{2n-1}{n-1}b \le a < 2\frac{n+1}{n}b$.

We show that for $c=\lfloor\frac{n-1}{n-2}(a-b) - \frac{2}{n-2}b\rfloor+1$ there is a $c$-separating $a$-list assignment $L$ of $P_{n+1}$ for which $L_1=L_{n+1}$, $|L_1|=b$, but no $(L,b)$-coloring exists.
First, we show that $a\ge 2c-1$: Since $c\le \frac{n-1}{n-2}a-\frac{n+1}{n-2}b +1$ and $a< 2\frac{n+1}{n}b$, i.e. $b > \frac{n}{2n+2}a$, then $2c-2 \le \frac{2n-2}{n-2}a-\frac{2n+2}{n-2}b < \frac{2n-2}{n-2}a-\frac{2n+2}{n-2}\frac{n}{2n+2}a = a$.\\
Second, we show that $c\ge b+1$: As $c> \frac{n-1}{n-2}a-\frac{n+1}{n-2}b$ and since $a\ge \frac{2n-1}{n-1}b$, then we obtain $c> \frac{n-1}{n-2}\frac{2n-1}{n-1}b -\frac{n+1}{n-2}b = b$.

Depending on the value of $a$ and $c$, we consider two subcases and for each we provide a $c$-separating $a$-list assignment $L$ for which $|L_1|=b$ and no $(L,b)$-coloring exists.

\item[Subcase 2.a.] $a\ge 2c$.

Consider the list assignment $L$ defined as follows:
\begin{center}
 \fbox{ \begin{minipage}{.96\textwidth}
$L_1:\ \ \ $ \framebox(40,10){$b$}
 
$L_2:\ \ \ $ \framebox(40,10){$b$}\hspace*{.5cm} \framebox(40,10){$a-b$}

$L_3:\ \ \ $ \hspace*{1.9cm} \framebox(25,10){$c$}\hspace*{.8cm} \framebox(52,10){$a-c$}

$\vdots$\hspace*{6.4cm}$\vdots$

$L_{n-1}:\ $ \hspace*{5.5cm} \framebox(25,10){$c$}\hspace*{.7cm} \framebox(52,10){$a-c$}

$L_n:\ \ \ $ \framebox(40,10){$b$}\hspace*{5.95cm} \framebox(25,10){$c$}\hspace*{1.2cm} \framebox(48,10){$a-c-b$}

$L_{n+1}:$ \framebox(40,10){$b$}
\end{minipage}}
\end{center}

Since we are in the case that $a\ge 2c$ and $c\ge b+1$, the list is well defined.
We have 
$$\Sigma(L)= 2b+ a-b + (n-3)(a-c)+a-c-b= (n-1)a - (n-2)c.$$
Therefore, $\Sigma(L)\ge (n+1)b$ implies $c \le \frac{n-1}{n-2}(a-b)-\frac{2}{n-2}b$. Consequently, $P_{n+1}$ is not $(L,b)$-colorable when $c> \frac{n-1}{n-2}(a-b)-\frac{2}{n-2}b$.

\item[Subcase 2.b.] $a=2c-1$.

We are in the case that $a< 2\frac{n+1}{n}b$, i.e., $a < 2b + \frac{2b}{n}$. Hence, as $a=2c-1$, $c$ satisfies
\begin{equation}\label{eq:1}
 c < b + \frac{b}{n} + \frac12
\end{equation}

If $n$ is odd, consider the list assignment $L$ defined as follows:
\begin{center}
 \fbox{ \begin{minipage}{.97\textwidth}
$L_1:\ \ \ $ \framebox(40,10){$b$}
 
$L_2:\ \ \ $ \framebox(40,10){$b$}\hspace*{.2cm} \framebox(52,10){$2c-b-1$}

$L_3:\ \ \ $ \hspace*{1.6cm} \framebox(36,10){$c$}\hspace*{.6cm} \framebox(32,10){$c-1$}

$L_4:\ \ \ $ \hspace*{3.6cm} \framebox(32,10){$c-1$}\hspace*{.2cm} \framebox(36,10){$c$}

$\vdots$\hspace*{6.4cm}$\vdots$

$L_{n-1}:\ $ \hspace*{6.8cm} \framebox(32,10){$c-1$}\hspace*{.2cm} \framebox(36,10){$c$}

$L_n:\ \ \ $ \framebox(40,10){$b$}\hspace*{7cm} \framebox(36,10){$c$}\hspace*{.2cm} \framebox(48,10){$c-b-1$}

$L_{n+1}:$ \framebox(40,10){$b$}
\end{minipage}}
\end{center}

We have 
$$\Sigma(L)=2b+ 2c-b-1 + \frac{n-3}{2}(c-1) + \frac{n-3}{2}c + c-b-1 = nc - \frac{n+1}{2}.$$
Therefore the amplitude condition is not satisfied if $nc - \frac{n+1}{2} < (n+1)b$, i.e., if $c< b + \frac{b}{n} + \frac12 + \frac{1}{2n}$ which is true by Equation~\ref{eq:1}.

If $n$ is even, consider the list assignment $L$ defined as follows:
\begin{center}
 \fbox{ \begin{minipage}{.95\textwidth}
$L_1:\ \ \ $ \framebox(40,10){$b$}
 
$L_2:\ \ \ $ \framebox(40,10){$b$}\hspace*{.2cm} \framebox(52,10){$2c-b-1$}

$L_3:\ \ \ $ \hspace*{1.6cm} \framebox(36,10){$c$}\hspace*{.6cm} \framebox(32,10){$c-1$}

$L_4:\ \ \ $ \hspace*{3.6cm} \framebox(32,10){$c-1$}\hspace*{.2cm} \framebox(36,10){$c$}

$\vdots$\hspace*{6.4cm}$\vdots$

$L_{n-1}:\ $ \hspace*{6.8cm} \framebox(36,10){$c$}\hspace*{.2cm} \framebox(32,10){$c-1$}

$L_n:\ \ \ $ \framebox(40,10){$b$}\hspace*{7.1cm} \framebox(32,10){$c-1$}\hspace*{.3cm} \framebox(40,10){$c-b$}

$L_{n+1}:$ \framebox(40,10){$b$}
\end{minipage}}
\end{center}

We have 
$$\Sigma(L)=2b+ 2c-b-1 + \frac{n-2}{2}(c-1) + \frac{n-4}{2}c + c-b= nc - \frac{n}{2}.$$
Therefore the amplitude condition is not satisfied if $nc - \frac{n}{2} < (n+1)b$, i.e., if $c< b + \frac{b}{n} + \frac12$ which is true by Equation~\ref{eq:1}.
\end{description}

The counter-examples presented are such that $L_1=L_{n+1}$, but they can be easily modified in order that $L_1\cap L_{n+1}=\emptyset$ without changing the conclusion. For this, in each of the above four list assignments, instead of using the $b$ colors of $L_1$ for $L_n$ and $L_{n+1}$, just take $b$ new colors (not used by any list $L_i$, $1\le i\le n-1$).
\end{proof}

\begin{theorem}\label{thm:fCn}
For any $a\ge b\ge 1$ and any $n\ge 4$, 
 \[\fsep(C_n,a,b) = \left\{\begin{array}{lccl}
\left\lfloor\frac{n-1}{n}(a-b)\right\rfloor, & b&\le &a< \frac{2n-1}{n-1}b\\
\left\lfloor\frac{n-1}{n-2}(a-b) - \frac{2}{n-2}b\right\rfloor, & \frac{2n-1}{n-1}b &\le &a < 2\frac{n+1}{n}b\\
a,& 2\frac{n+1}{n}b&\le &a.
                            \end{array}
\right.\]
\end{theorem}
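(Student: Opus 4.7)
The plan is to convert $(a,b,c)$-free-choosability of $C_n$ into $(L',b)$-colorability of a path $P_{n+1}$ by cutting the cycle at the precolored vertex (Remark~\ref{rem}) and then to invoke Lemmas~\ref{lem:Pn+1} and~\ref{lem:Pn+1onlyif}. The two non-trivial ranges of the piecewise formula correspond exactly to the regimes in those lemmas, so the first two cases are essentially a packaging exercise; the third case needs a short separate argument.

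For the lower bound in the first two cases, let $c=\lfloor c(n,a,b)\rfloor$ and fix a $c$-separating $a$-list assignment $L$ of $C_n$ together with a precolored set $C\subseteq L(x_1)$ of size $b$. Define a path list assignment $L'$ on $P_{n+1}$ by $L'_1=L'_{n+1}=C$ and $L'_i=L(x_i)$ for $2\le i\le n$. Since $C\subseteq L(x_1)$, the bounds $|L'_1\cap L'_2|\le|L(x_1)\cap L(x_2)|\le c$ and $|L'_n\cap L'_{n+1}|\le|L(x_n)\cap L(x_1)|\le c$ show that $L'$ is $c$-separating and has $|L'_1|=|L'_{n+1}|=b$. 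Lemma~\ref{lem:Pn+1} then yields an $(L',b)$-coloring $\varphi$, which is forced to satisfy $\varphi(x_1)=\varphi(x_{n+1})=C$; re-identifying the two endpoints converts $\varphi$ into a valid $(L,b)$-free-coloring of $C_n$ with the prescribed precoloring.

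For the upper bound in the first two cases, set $c=\lfloor c(n,a,b)\rfloor+1$ and invoke Lemma~\ref{lem:Pn+1onlyif} to obtain a $c$-separating $a$-list assignment $L'$ of $P_{n+1}$ with $L'_1=L'_{n+1}$ of size $b$ that admits no $(L',b)$-coloring. Collapsing the two endpoints into a single vertex of $C_n$ produces a $c$-separating $a$-list assignment $L$ with $|L(x_1)|=b\le a$ inheriting all the required intersection bounds; precoloring $x_1$ with $C=L(x_1)$ would force any free-coloring of $C_n$ to induce an $(L',b)$-coloring of $P_{n+1}$, contradicting the choice of $L'$. Hence $C_n$ is not $(a,b,c)$-free-choosable, and $\fsep(C_n,a,b)\le c-1=\lfloor c(n,a,b)\rfloor$.

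In the third case $a\ge 2\frac{n+1}{n}b$, the upper bound $\fsep(C_n,a,b)\le a$ is immediate. For the lower bound, since $c=a$ removes the separation constraint, the cutting procedure reduces the claim to showing that every $a$-list assignment of $P_{n+1}$ with $|L'_1|=|L'_{n+1}|=b$ admits an $(L',b)$-coloring, which by the sufficiency of the amplitude condition for paths (Cropper et al.) reduces to verifying $\Sigma_{i,j}(L')\ge(j-i+1)b$ on every sub-path. I expect this third-case amplitude check to be the main obstacle, since the threshold lies precisely at the boundary that Lemma~\ref{lem:Pn+1} excludes and the extremal configuration (with $L'_1=L'_{n+1}$ and all interior lists identical) makes the bound tight; the hypothesis $a\ge 2\frac{n+1}{n}b$ must be exploited carefully, tracking the parity of $n$, to push the count through at the sub-paths $[1,n+1]$, $[1,j]$ and $[i,n+1]$.
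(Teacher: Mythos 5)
Your handling of the first two ranges is exactly the paper's argument: cut the cycle at the precolored vertex to get a $c$-separating $a$-list assignment of $P_{n+1}$ with $|L'_1|=|L'_{n+1}|=b$ and apply Lemma~\ref{lem:Pn+1} for the lower bound, then identify the endpoints of the counter-example from Lemma~\ref{lem:Pn+1onlyif} (using the $L'_1=L'_{n+1}$ variant) for the upper bound. Those two cases are correct and need no further comment.

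The genuine gap is the third case. You do not prove it: you reduce it to an amplitude verification on $P_{n+1}$ and then explicitly defer that verification, calling it ``the main obstacle.'' The paper disposes of this case in one line by quoting from~\cite{AGT16} that $C_n$ is $(a,b,a)$-free-choosable when $a\ge 2\frac{n+1}{n}b$, so an admissible fix is simply to cite that result rather than re-derive it. But your instinct that the check is delicate is well founded, and you should be aware that the route you sketch does not close for odd $n$. Take every list equal to a fixed set $A$ with $|A|=a$ and precolor $x_1$ with $C\subseteq A$, $|C|=b$. For $n=2p+1$, each colour of $C$ can be used on $x_1$ plus an independent set of the path $x_3,\dots,x_{n-1}$, i.e.\ on at most $p$ vertices, and each colour of $A\setminus C$ on an independent set of the path $x_2,\dots,x_n$, again at most $p$ vertices; so at most $ap$ colour slots are available against the $nb=(2p+1)b$ required. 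This fails whenever $a<\frac{2n}{n-1}b$, and since $2\frac{n+1}{n}b<\frac{2n}{n-1}b$, the amplitude condition (which is necessary) is violated for, e.g., $n=5$, $a=12$, $b=5$, which lies in the claimed third range. So ``tracking the parity of $n$'' will not push the count through: for odd $n$ the correct threshold for the $\fsep=a$ regime is $\frac{2n}{n-1}b$, not $2\frac{n+1}{n}b$, and you would need to either restrict this case to even $n$, adjust the boundary for odd $n$, or pin down the exact statement of the cycle free-choosability theorem being invoked. As written, the proposal does not establish the third case.
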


\begin{proof}
First, if $2\frac{n+1}{n}b\le a$, then we know from~\cite{AGT16} that $C_n$ is $(a,b,a)$-free-choosable.

For the two other cases, given a $c$-separating $a$-list assignment $L$ of $C_n$ with $|L_1|=b$ and $c\le \lfloor c(n,a,b)\rfloor$, we consider the list assignment  $L'$ on $P_{n+1}$ obtained from $L$ on $C_n$ by cutting the cycle at $x_1$ as in Remark~\ref{rem}.
By Lemma~\ref{lem:Pn+1}, $P_{n+1}$ is $(L',b)$-colorable. Hence $C_n$ is $(L,b)$-colorable for $c\le \lfloor c(n,a,b)\rfloor$.
For the converse, if $c=\lfloor c(n,a,b)\rfloor +1$, then we know, by Lemma~\ref{lem:Pn+1onlyif} that there exists a $c$-separating $a$-list assignment $L'$ of $P_{n+1}$ with $L'_1=L'_{n+1}$ and $|L'_1|=b$, such that $P_{n+1}$ is not $(L',b)$-colorable. Therefore, identifying the vertices $x_1$ and $x_{n+1}$ of $P_{n+1}$, we obtain a $c$-separating $a$-list assignment $L$ on the cycle $C_n$ such that $|L_1|=b$ and no $(L,b)$-coloring of $C_n$ exists. 
\end{proof}

%
%

It only remains to determine the free separation number of the cycle of length $3$ which has a special behavior.
\begin{proposition}\label{prop:fC3}
 \[\fsep(C_3,a,b) = \left\{\begin{array}{ll}
\lfloor\frac{2}{3}(a-b)\rfloor, & b\le a< \frac{7}{4}b\\
2a-3b, & \frac{7}{4}b \le a < 3b\\
a,& 3b\le a.
                            \end{array}
\right.\]
\end{proposition}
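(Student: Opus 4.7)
The plan is to bypass the path-based machinery of Lemmas~\ref{lem:Pn+1} and~\ref{lem:Pn+1onlyif} and argue directly on the triangle, since the bounds obtained by cutting $C_3$ into $P_4$ would be strictly weaker than the correct ones (for example, Lemma~\ref{lem:Pn+1} at $n=3$ only yields $c\le 2a-4b$ in the middle regime, while the proposition asserts $c=2a-3b$). Fix a precoloring $L_1\subseteq L(x_1)$ with $|L_1|=b$ and set $L'_i=L(x_i)\setminus L_1$ for $i=2,3$. An extension amounts to choosing disjoint $b$-subsets $A\subseteq L'_2$ and $B\subseteq L'_3$, and a simple greedy argument (place as many elements of $A$ as possible in $L'_2\setminus L'_3$) shows that such a pair exists if and only if
\[ |L'_2|\ge b,\quad |L'_3|\ge b\quad\text{and}\quad |L'_2\cup L'_3|\ge 2b. \]

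For the lower bound on $\fsep(C_3,a,b)$, I would bound each of these three quantities from below in the worst case. The inequalities $|L_1\cap L(x_i)|\le\min(c,b)$ give $|L'_i|\ge a-\min(c,b)\ge b$ throughout the range of the proposition. For the union, $|L(x_2)\cup L(x_3)|\ge 2a-c$ combined with $|L_1\cap(L(x_2)\cup L(x_3))|\le\min(2c,b)$ (since each $L_1\cap L(x_i)$ is a subset of $L_1$ of size at most $c$) yields $|L'_2\cup L'_3|\ge 2a-c-\min(2c,b)$. Requiring this to be $\ge 2b$ produces $c\le 2(a-b)/3$ when $2c\le b$ and $c\le 2a-3b$ when $2c>b$; the transition $2c=b$ corresponds exactly to $a=7b/4$.

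For the matching upper bounds on the range $a<3b$, I would exhibit explicit $c$-separating list assignments saturating the union inequality. For $b\le a<7b/4$ and $c=\lfloor 2(a-b)/3\rfloor+1$, one still has $2c\le b$, so I can choose $L_1\cap L(x_2)$ and $L_1\cap L(x_3)$ as disjoint $c$-subsets of $L_1$ and $L(x_2)\cap L(x_3)$ as a $c$-subset disjoint from $L_1$, giving $|L'_2\cup L'_3|=2a-3c<2b$. For $7b/4\le a<3b$ and $c=2a-3b+1$, I force $L_1\subseteq L(x_2)\cup L(x_3)$ and $|L(x_2)\cap L(x_3)|=c$, with the portion of $L(x_2)\cap L(x_3)$ lying inside $L_1$ of size $\max(2c-b,0)$; this gives $|L'_2\cup L'_3|=2a-c-b=2b-1<2b$. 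Finally, the case $a\ge 3b$ is immediate from the known $(a,b,a)$-free-choosability of $C_3$ recorded in~\cite{AGT16}.

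The delicate step is the Regime~2 construction: the three pairwise intersections must simultaneously be exactly of size $c$ while keeping $L_1$ covered by $L(x_2)\cup L(x_3)$, and two subtly different box patterns (in the spirit of the diagrams used in Lemma~\ref{lem:Pn+1onlyif}) are needed according to whether $c\le b$ or $c>b$, the latter arising exactly when $a\ge 2b$, in which case $L_1$ must instead be taken inside $L(x_2)\cap L(x_3)$.
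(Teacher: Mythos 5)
Your overall strategy is genuinely different from the paper's: you reduce the extension problem to a two-vertex Hall-type condition on the truncated lists $L'_i=L(x_i)\setminus L_1$, whereas the paper checks the amplitude condition $\Sigma(L)\ge 3b$ on $C_3$ directly, invoking Cropper et al.\ for sufficiency on complete graphs. Your reduction is valid (the equivalence with $|L'_2|\ge b$, $|L'_3|\ge b$, $|L'_2\cup L'_3|\ge 2b$ is the standard two-set Hall condition), your lower-bound computation $|L'_2\cup L'_3|\ge 2a-c-\min(2c,b)$ correctly recovers both thresholds with the transition at $a=\tfrac74 b$, and your Case~2 constructions (with the $a\ge 2b$ / $a<2b$ dichotomy) match what the paper does. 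You are also right that Lemma~\ref{lem:Pn+1} at $n=3$ is too weak here, which is exactly why the paper treats $C_3$ separately.

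There is, however, a concrete gap in your Case~1 counterexample. You assert that for $c=\lfloor\tfrac23(a-b)\rfloor+1$ and $b\le a<\tfrac74 b$ one ``still has $2c\le b$,'' and your construction needs this to place two \emph{disjoint} $c$-subsets of $L_1$ as the traces $L_1\cap L(x_2)$ and $L_1\cap L(x_3)$. This is false in general: from $a<\tfrac74 b$ one only gets $c<\tfrac b2+1$, which for odd $b$ allows $2c=b+1$. Concretely, $a=5,b=3$ gives $c=2$ and $2c=4>3$, and $a=12,b=7$ gives $c=4$ and $2c=8>7$; in these cases two disjoint $c$-subsets of $L_1$ do not exist and your list assignment is undefined, so the upper bound $\fsep(C_3,a,b)\le\lfloor\tfrac23(a-b)\rfloor$ is not established there. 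The repair is to let the two traces on $L_1$ overlap, or (as the paper does) to give them sizes $c$ and $b-c$; the union count then becomes $|L'_2\cup L'_3|=2a-b-c$, and the inequality $2a-b-c<2b$, i.e.\ $c>2a-3b$, still holds throughout Case~1 because $2a-3b<\tfrac23(a-b)$ exactly when $a<\tfrac74 b$. So the gap is repairable, but as written the proof of the upper bound in the first regime does not go through.
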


\begin{proof}We consider the three following cases depending on $a$: 
\begin{description}
\item[Case 1.] $b\le a < \frac{7}{4}b$.

Let $c=\lfloor\frac{2}{3}(a-b)\rfloor+1$. We prove that $C_3$ is not $(a,b,c)$-free-choosable. For this, we give a $c$-separating $a$-list assignment $L$ for which $|L_1|=b$ and no $(L,b)$-coloring exists.  
%
%
%
%
\begin{center}
 \fbox{ \begin{minipage}{0.52\textwidth}
$L_1:$ \framebox(60,10){$b$}
 
$L_2:$ \framebox(25,10){$c$}\hspace*{1.5cm} \framebox(40,10){$a-c$}

$L_3:$ \hspace*{.9cm}\framebox(35,10){$b-c$}\hspace*{.3cm} \framebox(25,10){$c$}\hspace*{.8cm} \framebox(32,10){$a-b$}
\end{minipage}}
\end{center}

In order this list to be well defined, we must have $c\le b$. If $b=1$ then $a=1$ and thus $c=1=b$. If $b\ge 2$ then $c\le \frac23 (a-b) +1\le b$ if $a\le \frac{5b-3}{2}$ which is true since $\frac74 b\le \frac{5b-3}{2}$ for $b\ge 2$.
We must also have $a-c\ge c$ which is true since $a-2c \ge a -4(a-b)/3 = 4b/3 -a/3 \ge 0$ as $a< \frac{7}{4}b$. Moreover, we have $\Sigma(L)= b+ a-c+a-b = 2a-c$.  Since $c > \frac{2}{3}(a-b)$, $\Sigma(L)<3b$ if $6a -2a +2b< 9b$, i.e., if $a<\frac74 b$ which is true by hypothesis. Hence the amplitude condition is not satisfied and thus $C_3$ is not $(L,b)$-colorable.

Now, we prove that  $C_3$ is $(a,b,c')$-free-choosable with $c'=c-1= \lfloor\frac{2}{3}(a-b)\rfloor$. First, observe that we have $b- \frac{2}{3}(a-b) = \frac{5b-2a}{3}  \ge 0$, hence $b\ge c'$.
Since $C_3$ is a complete graph, by Cropper et al.'s result~\cite{CGHHJ}, the amplitude condition is sufficient in order it is $(L,b)$-colorable. This is clearly true for a subgraph of $C_3$ reduced to one vertex and for a subgraph of two vertices since for any $c'$-separating $a$-list assignment $L$ with $|L_1|=b$, we have $|L_2\cup L_3|\ge 2a-c'\ge 2b$ as $a\ge b$ and, for $i=2$ or $3$, $|L_1\cup L_i|\ge b+ a-\min(b,c') = b+a-c'\ge 2b$ since $c'<a-b$. 
For the whole graph, since $c'\le \frac{2}{3}(a-b)$, we have:
$$\Sigma(L)\ge |L_1| + |L_2\setminus L_1| + |L_3\setminus (L_1\cup L_2)| \ge b + a-c'+a-2c' = 2a+b-3c' \ge 3b.$$ 

\item[Case 2.] $\frac{7}{4}b \le a < 3b$.

Let $c=2a-3b+1$. We prove that $C_3$ is not $(a,b,c)$-free-choosable. For this, we give a $c$-separating $a$-list assignment $L$ for which $|L_1|=b$ and no $(L,b)$-coloring exists:
 We present a list for each of the two following subcases  depending on whether $a\ge 2b$ or not.
 
 If $a\ge 2b$, $L$ is made up as follows:
\begin{center}
 \fbox{ \begin{minipage}{0.56\textwidth}
$L_1:$ \framebox(50,10){$b$}
 
$L_2:$ \framebox(50,10){$b$}\hspace*{.5cm} \framebox(45,10){$a-b$}

$L_3:$ \framebox(50,10){$b$}\hspace*{.5cm} \framebox(35,10){$c-b$}\hspace*{.8cm} \framebox(50,10){$a-c$}
\end{minipage}}
\end{center}
For this list to be well defined, we must have $a-c\ge 0$ which is true by hypothesis. In order $L$ to be $c$-separating, we must have $b\le c=2a-3b+1$, i.e., $a\ge 2b-\frac12$ which is true.

Moreover, we have $\Sigma(L)= b+ a-b+a-c = 2a-c = 3b-1 < 3b$, hence the amplitude condition is not satisfied and thus $C_3$ is not $(L,b)$-colorable.

If $a< 2b$, $L$ is made up as follows:
\begin{center}
 \fbox{ \begin{minipage}{0.57\textwidth}
$L_1:$ \framebox(60,10){$b$}
 
$L_2:$ \framebox(32,10){$c$}\hspace*{1.6cm} \framebox(45,10){$a-c$}

$L_3:$ \hspace*{1.05cm} \framebox(28,10){$b-c$}\hspace*{.6cm} \framebox(35,10){$c$}\hspace*{.8cm} \framebox(40,10){$a-b$}
\end{minipage}}
\end{center}
For this list to be well defined, we must have $c=2a-3b+1 \ge b-c= 4b-2a-1$, i.e., $a\ge \frac{7}{4}b -1/2$, which is true by hypothesis. We also have $c\le a-c$ since $a< 2b$.

We have $\Sigma(L)= b+a-c+a-b=2a-c= 3b-1 < 3b$, hence the amplitude condition is not satisfied and thus $C_3$ is not $(L,b)$-colorable.

Now, in both the cases $a\ge 2b$ and $a<2b$, we prove that  $C_3$ is $(a,b,c')$-free-choosable with $c'=c-1= 2a-3b$. Again, it is sufficient to verify that the amplitude condition is satisfied by any $c'$-separating $a$-list assignment $L$ with $|L(x_1)|=b$. This is clearly true for a subgraph of $C_3$ reduced to one vertex and for a subgraph of two vertices since we have $|L_2\cup L_3|\ge 2a-c'=3b\ge 2b$ and $|L_1\cup L_i|\ge b+ a-\min(b,c')\ge 2b$ for $i=2$ or $3$. For the whole graph, we have 
$$\Sigma(L)\ge |L1| + |L2\setminus L_1| + |L_3\setminus (L_1\cup L_2)| \ge b + a-\alpha+a-\beta,$$
with $\alpha=|L_1\cap L_2|$ and $\beta = |(L_1\cup L_2)\cap L_3|$. Let $\beta=\beta_1 + \beta_2-\gamma$, where $\beta_1=|L_1\cap L3|$, $\beta_2=|L_2\cap L_3|$ and $\gamma = |L_3\cap L_2\cap L_1|$. Then we have $\Sigma(L)\ge b +2a -\alpha -\beta_1-\beta_2 + \gamma$. As, by definition, $\alpha + \beta_1 + \gamma \le b$ and $\beta_2\le c'$, then we obtain $\Sigma(L)\ge b+2a-b-c' = 2a-c' = 3b$.

 \item[Case 3.] $a\ge 3b$.
 In this case, $C_3$ is trivially $(a,b,a)$-free-choosable and the result follows.
\end{description}
In conclusion, in each of the three cases, we have shown that the maximum value of $c$ for which $C_3$ is $(a,b,c)$-free choosable is the one given in the statement.
\end{proof}

%

\section{Outerplanar graphs}
\label{sec:fsepCact}

An {\em outerplanar graph} is a graph that has a planar drawing in which all vertices belong to the outer face of the drawing. 
For an outerplanar graph $G$, we denote by $\mathcal{T}_G$ the {\em weak dual} of $G$, i.e., the graph whose vertex set is the set of all inner faces of $G$, and $E(\mathcal{T}_G)=\{\alpha \beta |\ \alpha \text{ and  } \beta \text{ share a common edge} \}$.
 A {\em cactus} is a graph in which every edge is part of at most one cycle. Cactuses form a subclass of outerplanar graphs. The {\em girth} of a graph $G$ is the length of a shortest cycle in $G$.

We will use the fact that if an outerplanar graph $G$ is 2-connected, then $\mathcal{T}_G$ is a tree.
Moreover, in order to restrict our argument to 2-connected graphs, we first show the following which has been proved in~\cite{AGT10} in the case $c=a$: 
\begin{lemma}\label{le2connex}
Let $a,b,c$ be integers and let $G_1 ,G_2$ be two $(a,b,c)$-free-choosable graphs. Then the graph obtained from $G_1$ and $G_2$ by identifying any vertex of $G_1$ with any vertex of $G_2$ is $(a,b,c)$-free-choosable.
\end{lemma}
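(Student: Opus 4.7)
My plan is to distinguish two cases depending on whether the precolored vertex coincides with the identification vertex or not, and in both cases build a coloring of $G$ by applying the free-choosability hypothesis to $G_1$ and $G_2$ separately.

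Let $v_1\in V(G_1)$ and $v_2\in V(G_2)$ be the vertices that are identified into a single vertex $v$ of $G$, so that $V(G)=(V(G_1)\setminus\{v_1\})\cup(V(G_2)\setminus\{v_2\})\cup\{v\}$ and $E(G)=E(G_1)\cup E(G_2)$, viewed as a disjoint union after the identification. Let $L$ be any $c$-separating $a$-list assignment of $G$, let $w\in V(G)$ be the precolored vertex, and let $C\subset L(w)$ with $|C|=b$ be the prescribed color set. The restriction $L^{(i)}$ of $L$ to $V(G_i)$ (where we set $L^{(i)}(v_i)=L(v)$) is still a $c$-separating $a$-list assignment of $G_i$, because every edge of $G_i$ corresponds to an edge of $G$.

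In the first case $w=v$, I would apply the $(a,b,c)$-free-choosability of $G_1$ to $L^{(1)}$ with precolored vertex $v_1$ and prescribed set $C$, obtaining an $(L^{(1)},b)$-coloring $\varphi_1$ with $\varphi_1(v_1)=C$; likewise, I would apply the $(a,b,c)$-free-choosability of $G_2$ to $L^{(2)}$ with precolored vertex $v_2$ and prescribed set $C$, obtaining $\varphi_2$ with $\varphi_2(v_2)=C$. Since $\varphi_1$ and $\varphi_2$ agree on the identified vertex, gluing them yields a well-defined coloring $\varphi$ of $G$, and $\varphi$ satisfies the non-conflict condition on every edge because every edge of $G$ lies in exactly one of $G_1$, $G_2$.

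In the second case $w\neq v$, assume without loss of generality $w\in V(G_1)\setminus\{v_1\}$. I would first apply the $(a,b,c)$-free-choosability of $G_1$ to $L^{(1)}$, precoloring $w$ with $C$, which yields an $(L^{(1)},b)$-coloring $\varphi_1$ of $G_1$. This determines a subset $\varphi_1(v_1)\subset L(v)$ of size $b$. I would then apply the $(a,b,c)$-free-choosability of $G_2$ to $L^{(2)}$ with precolored vertex $v_2$ and prescribed set $\varphi_1(v_1)$, obtaining $\varphi_2$. Gluing $\varphi_1$ and $\varphi_2$ (which agree at the identified vertex by construction) produces the desired $(L,b)$-coloring of $G$ with $\varphi(w)=C$.

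I do not foresee any real obstacle; the only subtlety is ensuring that the list at the identified vertex is handled consistently, which is why the second step of Case~2 uses the already-computed $\varphi_1(v_1)$ as the precolor imposed on $G_2$. The argument relies crucially on the fact that $G$ has no edges between $V(G_1)\setminus\{v_1\}$ and $V(G_2)\setminus\{v_2\}$, so constraints from the two sides only interact through the identified vertex.
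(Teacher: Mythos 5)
Your proposal is correct and follows essentially the same approach as the paper: color $G_1$ first with the precolored vertex, then color $G_2$ using the color set obtained at the identified vertex as its precolor, and glue. The paper simply treats your two cases uniformly by assuming without loss of generality that the precolored vertex lies in $V(G_1)$ (which includes the identified vertex itself), so your extra case distinction is harmless but unnecessary.
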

\begin{proof}Let $G$ be the graph obtained by identifying vertex $x_1$ of $G_1$ with vertex $x_2$ of $G_2$, resulting in a vertex named $x$.
Let $y \in V(G)$ and let $L$ be a list assignment of $G$ with $|L(v)|=a$ for $v\in V(G)\setminus\{y\} $ and $|L(y)|=b$ (i.e. $y$ is the precolored vertex).
Assume without loss of generality, that $y\in V(G_1)$. Let $L_i, i=1,2$, be the sublist assignment of $L$ restricted to vertices of $G_i$.
As $G_1 ,G_2$ are both $(a,b,c)$-free-choosable, there exists an $(L_1,b)$-coloring $c_1$ of $G_1$ and an  $(L_2,b)$-coloring $c_2$ of $G_2$ such that $c_2(x)=c_1(x)$ (i.e. $x$ is the precolored vertex of $G_2$). The union of colorings $c_1$ and $c_2$ is an $(L,b)$-coloring of $G$.
\end{proof}

\begin{lemma}\label{le:fCn}
 For any positive integers $a,b,n$ and $i$, if $n\ge 4$ then $$\fsep(C_n,a,b)\le \fsep(C_{n+i},a,b).$$ 
 
 Moreover, we have $$\min(\fsep(C_3,a,b),\fsep(C_{n},a,b))=
 \left\{\begin{array}{ll}
\lfloor\frac{2}{3}(a-b)\rfloor     & \mbox{ if }  b\le a < \frac{7}{4}b\\
2a-3b                         & \mbox{ if } \frac{7}{4}b\le a \le \frac{2n+1}{n+1}b\\
& \mbox{ or } \frac{2n+2}{n}b\le a < 3b\\
\lfloor\frac{n-1}{n}(a-b)\rfloor & \mbox{ if }  \frac{2n+1}{n+1}b < a < \frac{2n-1}{n-1}b\\      
\lfloor\frac{n-1}{n-2}(a-b)-\frac{2}{n-2}b\rfloor & \mbox{ if } \frac{2n-1}{n-1}b\le a < \frac{2n+2}{n}b\\
a                                  & \mbox{ if } 3b\le a.\\
      \end{array}\right.$$
\end{lemma}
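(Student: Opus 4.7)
The plan is to derive both assertions directly from the closed-form expressions: Theorem~\ref{thm:fCn} for $\fsep(C_n,a,b)$ and Proposition~\ref{prop:fC3} for $\fsep(C_3,a,b)$. Both parts reduce to a case analysis on where $a$ sits relative to the breakpoints of the three regimes appearing in these formulas, and no new combinatorial construction is required.

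For the monotonicity $\fsep(C_n,a,b)\le\fsep(C_{n+i},a,b)$ it suffices to treat $i=1$ and iterate. Fix $a,b$ and set $F(n)=\fsep(C_n,a,b)$. The breakpoints $t_1(n)=\frac{2n-1}{n-1}b$ and $t_2(n)=\frac{2(n+1)}{n}b$ of the formula are strictly decreasing in $n$, so a fixed $a$ can only move to a higher-indexed regime when $n$ grows. Within regime~1 the value $\lfloor\frac{n-1}{n}(a-b)\rfloor$ is plainly nondecreasing in $n$. Within regime~2 I would compute
\[
\frac{\partial}{\partial n}\!\left(\frac{(n-1)a-(n+1)b}{n-2}\right)=\frac{3b-a}{(n-2)^2}>0,
\]
using the key bound $a<t_2(n)\le\frac{5}{2}b<3b$ valid for $n\ge 4$. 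Regime~3 gives the constant value $a$. At the boundary $a=t_1(n)$ the regime-1 and regime-2 formulas agree (both equal $b$), and at $a=t_2(n)$ the value jumps upward (from $\frac{(n+1)b}{n}$ to $a$), so monotonicity survives the regime transitions, and applying $\lfloor\cdot\rfloor$ preserves the inequality.

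For the explicit minimum, I would interlace the breakpoints: for $n\ge 4$ one has $\frac{7}{4}b<2b<t_1(n)\le t_2(n)<3b$, which splits the range of $a$ into five intervals on which both $\fsep(C_3,a,b)$ and $\fsep(C_n,a,b)$ are given by a single expression. In each interval I would compare the two values. The only nontrivial crossover occurs in $\frac{7}{4}b\le a<t_1(n)$, where $C_3$ lies in regime~2 with value $2a-3b$ while $C_n$ lies in regime~1; solving $2a-3b=\frac{n-1}{n}(a-b)$ yields the new threshold $a=\frac{2n+1}{n+1}b$, which produces the two subcases in the claim. The remaining comparisons are short: on the regime-1/regime-1 interval $\lfloor\frac{2}{3}(a-b)\rfloor\le\lfloor\frac{n-1}{n}(a-b)\rfloor$ since $\frac{2}{3}<\frac{n-1}{n}$; on the regime-2/regime-2 interval the $C_n$ value is smaller, reducing to $(n-3)a\ge(2n-7)b$, which follows from $a\ge t_1(n)$; on the regime-2/regime-3 interval, $2a-3b<a$ since $a<3b$; and on the regime-3/regime-3 interval both values equal $a$.

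The principal obstacle is identifying and justifying the novel crossover $\frac{2n+1}{n+1}b$, which does not appear in either of the two source formulas and carves the middle interval into the two subcases of the statement. The floor functions create no real difficulty because $2a-3b$ and $a$ are integers, so every comparison between such an integer and a floored expression transfers directly from the corresponding real inequality.
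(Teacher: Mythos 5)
Your proposal is correct and follows essentially the same route as the paper: both parts are obtained purely by case analysis on the closed-form values from Theorem~\ref{thm:fCn} and Proposition~\ref{prop:fC3}, comparing the relevant linear expressions on each interval determined by the (decreasing) breakpoints and locating the crossover $a=\frac{2n+1}{n+1}b$ where $2a-3b$ meets $\frac{n-1}{n}(a-b)$. The paper simply verifies the inequality $c(n,a,b)\le c(n+1,a,b)$ directly on four intervals rather than via your derivative-plus-boundary argument, but the content is the same.
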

\begin{proof}
The values of $\fsep$ for a cycle are given in Theorem~\ref{thm:fCn}. For the first assertion, we show that in each of the following cases we have $A=c(n,a,b)\le B=c(n+1,a,b)$ (recall that $\fsep(C_n,a,b)=\lfloor c(n,a,b)\rfloor$).
\begin{itemize}
 \item $b\le a < \frac{2(n+1)-1}{n}b$.
 
 Since $\frac{n-1}{n}\le \frac{n}{n+1}$, we have $A=\frac{n-1}{n}(a-b)\le B=\frac{n}{n+1}(a-b)$.
 
 \item $\frac{2n+1}{n}b \le a < \frac{2n-1}{n-1}b$.
 
 We want $A=\frac{n-1}{n}(a-b)\le B=\frac{n(a-b) -2b}{n-1}$, i.e. $(n-1)^2(a-b)\le n^2(a-b)-2bn$. This can be rewritten as $a\geq \frac{4n-1}{2n-1}b$, which is true since $a\ge \frac{2n+1}{n}b$.
 
 \item $\frac{2(n+1)-1}{n}b\le a < 2\frac{n+2}{n+1}b$.
 
 We want $A=\frac{(n-1)(a-b) -2b}{n-2} \le B=\frac{n(a-b) -2b}{n-1}$, which simplifies to $a\le 3b$ an is true by hypothesis.
 
 \item $2\frac{n+2}{n+1}b\le a \le 2\frac{n+1}{n}b$.
 
 We want $A=\frac{(n-1)(a-b) -2b}{n-2} \le B=a$, i.e., $a\le (n+1)b$ which is true.
 
\end{itemize}

 For the second assertion, by Proposition~\ref{prop:fC3} and Theorem~\ref{thm:fCn}, we infer the desired inequalities between $A=\fsep(C_3,a,b)$ and $B=\fsep(C_n,a,b)$, $n\ge 4$.
 \begin{itemize}
  \item For $b\le a < \frac{7}{4}b$, we have $A=\lfloor\frac{2}{3}(a-b)\rfloor \le B=\lfloor\frac{n-1}{n}(a-b)\rfloor$.
  \item For $\frac{7}{4}b\le a \le \frac{2n+1}{n+1}b$, we have $A=2a-3b \le B=\lfloor\frac{n-1}{n}(a-b)\rfloor$.
  \item For $\frac{2n+1}{n+1}b < a < \frac{2n-1}{n-1}b$, we have $A=2a-3b \ge B=\lfloor\frac{n-1}{n}(a-b)\rfloor$.
  \item For $\frac{2n-1}{n-1}b\le a < \frac{2n+2}{n}b$, we have $A=2a-3b \ge B=\lfloor\frac{n-1}{n-2}(a-b)-\frac{2}{n-2}b\rfloor$.
  \item For $\frac{2n+2}{n}b\le a < 3b$, we have $A=2a-3b \le B=a$.
  \item For $3b\le a$, we have $A= B=a$.
  
 \end{itemize}

\end{proof}

From the free-separation number of the cycle we derive the free-separation number of cactuses. 
\begin{theorem}
 \label{th:fcact}
 Let $G$ be a cactus with finite girth $g$ and let $a\ge b\ge 1$ be integers. Then If $g\ge 4$ or if $G$ has only cycles of length three, then 
 \[\fsep(G,a,b) = \fsep(C_g,a,b).\]
 
 Otherwise, if $G$ contains at least one triangle and if $\ell$ is the length of a shortest cycle of $G$ greater than three, then
  \[\fsep(G,a,b) = \left\{\begin{array}{ll}
                           \fsep(C_{\ell},a,b) &\mbox{ if } \frac{2\ell+1}{\ell+1}b < a < \frac{2\ell+2}{\ell}b,\\
                           \fsep(C_3,a,b) & \mbox{ otherwise.}
                          \end{array}\right.
\]
 
\end{theorem}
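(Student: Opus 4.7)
My plan is to establish the identity
\[ \fsep(G,a,b) \;=\; \min_{C}\fsep(C,a,b), \]
where $C$ ranges over the cycles of $G$, and then read off the case distinction in the statement directly from Lemma~\ref{le:fCn}. For the upper bound, fix any cycle $C$ of $G$ and set $c=\fsep(C,a,b)+1$. By definition of $\fsep$, there exist a $c$-separating $a$-list assignment $L_{C}$ of $C$, a vertex $v\in V(C)$, and a $b$-subset $\varphi(v)\subseteq L_{C}(v)$ admitting no extension to an $(L_{C},b)$-coloring of $C$. I extend $L_{C}$ to $G$ by choosing, for every $u\in V(G)\setminus V(C)$, a list $L(u)$ of $a$ fresh colors, pairwise disjoint and disjoint from every color used by $L_{C}$. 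The resulting list assignment $L$ is still $c$-separating, since any intersection across an edge of $G$ either lies within $C$ (hence is $\le c$ by hypothesis) or involves a fresh-color list (hence is empty). Any $(L,b)$-coloring of $G$ extending $\varphi(v)$ would restrict to an $(L_{C},b)$-coloring of $C$ extending $\varphi(v)$, which is impossible; thus $\fsep(G,a,b)<c=\fsep(C,a,b)+1$.

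For the lower bound I use the block structure of a cactus: every block is either a bridge or a cycle, and the blocks are arranged in a tree (the block-cut tree of $G$). Set $c^{*}=\min_{C}\fsep(C,a,b)$ over the cycle blocks of $G$; since $g$ is finite, this minimum exists. A bridge is a $K_{2}$, hence trivially $(a,b,a)$-free-choosable and in particular $(a,b,c^{*})$-free-choosable, while every cycle block is $(a,b,c^{*})$-free-choosable by definition of $c^{*}$. I then induct on the number of blocks: selecting a leaf block $B$ of the block-cut tree and writing $G=G'\cup B$ with $V(G')\cap V(B)$ a single cut vertex, the inductive hypothesis gives that $G'$ is $(a,b,c^{*})$-free-choosable, and Lemma~\ref{le2connex} transfers this property to the gluing $G$. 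Therefore $\fsep(G,a,b)\ge c^{*}$, and combined with the upper bound we obtain $\fsep(G,a,b)=c^{*}$.

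It remains to evaluate $c^{*}$ as in the statement. When $g\ge 4$, all cycle lengths of $G$ are at least $g\ge 4$, and the first assertion of Lemma~\ref{le:fCn} makes $n\mapsto\fsep(C_{n},a,b)$ non-decreasing on $\{n\ge 4\}$, so $c^{*}=\fsep(C_{g},a,b)$; the same conclusion holds trivially when every cycle of $G$ is a triangle. When $g=3$ and $G$ also contains a cycle of length greater than three, letting $\ell$ denote the length of a shortest such cycle gives, again by monotonicity, that $\min_{n\ge 4,\,C_{n}\subseteq G}\fsep(C_{n},a,b)=\fsep(C_{\ell},a,b)$, so $c^{*}=\min(\fsep(C_{3},a,b),\fsep(C_{\ell},a,b))$. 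The second assertion of Lemma~\ref{le:fCn} then identifies this minimum with $\fsep(C_{\ell},a,b)$ exactly on the interval $\frac{2\ell+1}{\ell+1}b<a<\frac{2\ell+2}{\ell}b$ and with $\fsep(C_{3},a,b)$ elsewhere, yielding the claimed formula. I expect the only delicate step to be the lifting of the counterexample in the upper bound, and using a completely fresh palette outside $V(C)$ neutralises the $c$-separating constraint there regardless of how small $c$ is.
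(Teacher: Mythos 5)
Your overall strategy---reduce to $\fsep(G,a,b)=\min_{C}\fsep(C,a,b)$ over the cycles of $G$, prove the lower bound by block decomposition together with Lemma~\ref{le2connex}, and then evaluate the minimum via Lemma~\ref{le:fCn}---is exactly the paper's argument; your explicit fresh-palette lifting of the counterexample for the upper bound is a welcome elaboration of a step the paper only asserts (and it is sound here because cycles in a cactus are chordless, so the extended assignment really is $c$-separating).

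There is, however, one false claim in your lower bound: a bridge, viewed as $K_{2}$, is \emph{not} ``trivially $(a,b,a)$-free-choosable.'' If $b\le a<2b$, take $L(u)=L(v)$ of size $a$ (an $a$-separating assignment) and precolor $u$ with any $b$-subset; then only $a-b<b$ colors remain available at $v$, so no $(L,b)$-coloring exists. In fact $\fsep(K_{2},a,b)=a-b$ when $a<2b$. The conclusion you need---that $K_{2}$ is $(a,b,c^{*})$-free-choosable---is still true, but it requires the argument the paper actually gives: $\fsep(K_{2},a,b)\ge a-b$ always, and one checks from Theorem~\ref{thm:fCn} and Proposition~\ref{prop:fC3} that $\fsep(C_{n},a,b)\le a-b$ whenever $a<2b$ (while for $a\ge 2b$ the edge is indeed $(a,b,a)$-free-choosable). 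With that justification substituted, your proof is complete and coincides with the paper's.
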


\begin{proof}
Let $G$ be a cactus of finite girth $g$ and let $a,b,c$ be integers. Then, each of its blocks $B_1, B_2, \ldots, B_r$ is either a cycle (of length at least $g$) or a single edge, and they are connected in a treelike structure.  

We first show that if $g\ge 4$ or if all cycles of $G$ are of length 3, then $G$ is $(a,b,c)$-free-choosable if and only if $c\le \fsep(C_g,a,b)$.
Let $c\le fsep(C_g,a,b)$ and $L$ be a $c$-separating $a$-list assignment of $G$ such that $|L(x_1)|=b$ (i.e., $x_1$ is the precolored vertex) and suppose without loss of generality that $x_1\in B_1$. 
By Lemma~\ref{le2connex}, it is sufficient to prove that each block is $(a,b,c)$-choosable. By Lemma~\ref{le:fCn}, if $g\ge 4$, then each block consisting of a cycle $C$ of length at least $g$ can be colored if $c\le \fsep (C_g,a,b)$. Also, trivially, any edge is $(a,b,c)$-choosable if $c\le a-b$, hence $\fsep(K_2,a,b)\ge a-b\ge  \fsep(C_g,a,b)$. Therefore, there exists an $(L,b)$-coloring of $G$ and thus it is $(a,b,c)$-free-choosable. Moreover, since $G$ contains a cycle of length $g$, then  $\fsep(G,a,b) \le \fsep(C_g,a,b)$.

Second, if $G$ contains a triangle and cycles of length greater than three, let $\ell$ be the length of the shortest cycle of length at least 4. Then, by Lemma~\ref{le:fCn}, $\fsep(C_3,a,b)> \fsep(C_{\ell},a,b)$ if and only if $\frac{2\ell+1}{\ell+1}b < a < \frac{2\ell+2}{\ell}b$. We then proceed as for the first part of the proof, but with $c\le \min(\fsep(C_g,a,b),\fsep(C_{\ell},a,b))$.
\end{proof}

As the free-separation number is a lower bound of the separation number, Theorem~\ref{th:fcact} provides a lower bound on the separation number of cactuses. Moreover, the following result shows that this lower bound is tight in some sense.

\begin{proposition}\label{prop:sepCact}
 For any $a\ge b\ge 1$, and any $p\ge 3$, there exists a cactus $G$ of girth $p$ such that $\sep(G,a,b)=\fsep(G,a,b)$.
\end{proposition}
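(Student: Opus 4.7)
The plan is to exhibit a specific cactus $G$ of girth $p$ for which $\sep(G,a,b) \le \fsep(C_p,a,b)$; combined with Theorem~\ref{th:fcact} (which gives $\fsep(G,a,b) = \fsep(C_p,a,b)$) and the general inequality $\sep(G,a,b) \ge \fsep(G,a,b)$, this immediately forces $\sep(G,a,b) = \fsep(G,a,b)$. Let $G$ be obtained by gluing $\binom{a}{b}$ disjoint copies of $C_p$ at a single common vertex $x$, one copy $G^{(C)}$ for each $b$-element subset $C$ of $\{1,\ldots,a\}$. All cycles of $G$ have length $p$, so $G$ satisfies the hypothesis (``$g \ge 4$'' or ``only cycles of length three'') of the first clause of Theorem~\ref{th:fcact}, yielding $\fsep(G,a,b) = \fsep(C_p,a,b)$.

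Write $c^{*} = \fsep(C_p,a,b)$. The degenerate case $c^{*} = a$ is automatic since $\sep(G,a,b) \le a$ always. Assume $c^{*} < a$, set $c = c^{*} + 1$, and build a $c$-separating $a$-list assignment $L$ on $G$ with no $(L,b)$-coloring. Fix $L(x) = \{1,\ldots,a\}$. For each $b$-subset $C \subset L(x)$, Lemma~\ref{lem:Pn+1onlyif} (if $p \ge 4$) or the explicit counter-examples from the proof of Proposition~\ref{prop:fC3} (if $p = 3$) give a $c$-separating $a$-list assignment $L^{(C)}$ on $P_{p+1}$ whose two endpoints both carry the list $C$ and which admits no $(L^{(C)},b)$-coloring. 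Transplant $L^{(C)}$ onto $G^{(C)}$ by identifying both endpoints of the path with $x$, and choose every color of an internal list $L^{(C)}(v)$ lying outside $C$ to be a ``fresh'' color taken outside $\{1,\ldots,a\}$ and not used by any other gadget. Inside $G^{(C)}$ the $c$-separating bound and list size $a$ are inherited from the lemma/proposition, while for an edge $xy$ with $y$ an internal neighbor of $x$ in $G^{(C)}$ the fresh-color choice gives
\[
L(x) \cap L(y) \;=\; \{1,\ldots,a\} \cap L^{(C)}(y) \;=\; C \cap L^{(C)}(y),
\]
whose size is at most $c$ by the lemma/proposition.

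Finally, suppose $\varphi$ were an $(L,b)$-coloring of $G$ and let $C^{*} = \varphi(x)$; restricting $\varphi$ to $G^{(C^{*})}$ and reopening the identification yields an $(L^{(C^{*})},b)$-coloring of $P_{p+1}$ with both endpoints colored by $C^{*}$, which is exactly what the lemma/proposition forbids---contradiction. Hence $\sep(G,a,b) < c$, so $\sep(G,a,b) \le c^{*} = \fsep(G,a,b)$. The step I expect to be most delicate is preserving the $c$-separating condition at the $2\binom{a}{b}$ edges incident to $x$ after enlarging the size-$b$ endpoint list of the lemma/proposition to the size-$a$ list $L(x)$; the fresh-colors trick above is engineered exactly so that the extra $a-b$ colors of $L(x)$ remain invisible to every gadget-internal list, and every such intersection reduces to the prescribed size-$c$ piece sitting inside $C$.
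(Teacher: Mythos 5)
Your construction is exactly the paper's: glue $\binom{a}{b}$ copies of $C_p$ at a hub carrying the full list $\{1,\ldots,a\}$, and for each $b$-subset $C$ plant a non-colorable $(c^*+1)$-separating gadget (from Lemma~\ref{lem:Pn+1onlyif} or Proposition~\ref{prop:fC3}) whose colors outside $C$ are fresh, so that the separation bound at the hub is preserved. Your write-up is correct and in fact spells out two points the paper leaves implicit, namely the appeal to Theorem~\ref{th:fcact} for $\fsep(G,a,b)=\fsep(C_p,a,b)$ and the verification of the $c$-separating condition on the edges incident to the hub after its list is enlarged from $b$ to $a$ colors.
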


\begin{proof}
Let $c=\fsep(C_p,a,b)$, and let $k= {b\choose a}$.
 Let $G$ be the graph obtained by joining $k$ copies  $C^1, C^2, \ldots, C^k$ of the cycle $C_p$ of length $p$ at a shared universal vertex $x_1$ (see Figure~\ref{fig:2C4} for an illustration in the case $a=2$ and $b=1$). 
Let $B_i$, $1\le i\le k$ be the sets of $b$-subsets of $\{1,\ldots, a\}$. 
In order to show that $\sep(G,a,b)=c$, we construct a $(c+1)$-separating $a$-list assignment $L$ of $G$ for which no $(L,b)$-coloring exists as follows:
For each $i, 1\le i\le k$, let $L^i$ be a $(c+1)$-separating $a$-list assignment of $C^i$ for which $L(x_1)=B_i$ and other lists of colors only use colors from $B_i$ and from a set of colors $A$ with $A\cap \{1,\ldots, a\}=\emptyset$ and such that $C^i$ is not $(L^i,b)$-colorable. Since $\fsep(C_p,a,b)=c$, such an assignment exists.
Let now $L$ be the list assignment of $G$ defined by
\[L(y)=\left\{\begin{array}{ll}
               \{1,\ldots, a\}, & \mbox{ if } y=x_1\\
               L^i(y) , & \mbox{ if } y\ne x_1 \mbox{ and } y\in C^i.
              \end{array}\right. 
\]
Then, by construction, $L$ is a $(c+1)$-separating $a$-list assignment of $G$.  Moreover, whatever the choice of the set of $b$ colors for the vertex $x_1$, there will be a cycle $C^i$ on which the $(L,b)$-coloring cannot be completed. Therefore, $\sep(G,a,b)\le c$ and since $\sep(G,a,b)\ge \fsep(G,a,b)$, we have that $\sep(G,a,b) = \fsep(G,a,b)$.
\end{proof}
 
 

\begin{theorem}
 \label{th:outer}
Let $G$ be an outerplanar graph with finite girth $g\ge 5$ and let $a,b$ be integers, $a\ge b\ge 1$. Then we have 
\[ \fsep(C_{g-1},a,b)\le \fsep(G,a,b)\le \fsep(C_{g},a,b). \]
\end{theorem}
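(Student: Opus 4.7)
The plan is to prove the two inequalities separately, the upper one by a subgraph-extension argument and the lower one by induction on the face structure of $G$.

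For the upper bound $\fsep(G,a,b)\le \fsep(C_g,a,b)$, I would take a $c$-separating $a$-list assignment $L$ on the subgraph $C_g\subseteq G$ (with $c=\fsep(C_g,a,b)+1$ and a precolored vertex) that admits no $(L,b)$-coloring, and extend it to $G$ by assigning to each vertex of $V(G)\setminus V(C_g)$ an $a$-set of fresh colors, pairwise disjoint and disjoint from every existing list. The extension stays $c$-separating (every new intersection is empty) yet any $(L,b)$-coloring of the extended $G$ would restrict to one of $C_g$, a contradiction.

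For the lower bound I first reduce to the 2-connected case via Lemma~\ref{le2connex}: every block of $G$ is either a $K_2$, which is $(a,b,c)$-free-choosable for every $c\le \fsep(C_{g-1},a,b)$ by a quick direct check on intersections (separating the regimes $a<2b$ and $a\ge 2b$), or a 2-connected outerplanar graph of girth at least $g$, handled by induction on $|V(G)|$. For 2-connected $G$ I then induct on the number $r$ of internal faces. The base case $r=1$ reads $G=C_g$ and the inequality follows from Lemma~\ref{le:fCn} with $n=g-1\ge 4$. For the inductive step $r\ge 2$, $\mathcal{T}_G$ is a tree with at least two leaves, so I can pick a leaf face $F$ whose degree-$2$ interior does not contain the precolored vertex $x$ (any leaf works if $\deg_G(x)\ge 3$; otherwise $x$ lies in the interior of a unique face and any other leaf suffices). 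Writing $u_0,u_1,\ldots,u_{f-1}$ for the boundary of $F$ with $u_0 u_{f-1}$ the edge shared with the neighbour of $F$ in $\mathcal{T}_G$ and $f=|F|\ge g$, set $G'=G\setminus\{u_1,\ldots,u_{f-2}\}$. Then $G'$ is 2-connected outerplanar of girth at least $g$ with $r-1$ faces, so by induction it admits an $(L|_{G'},b)$-coloring $\varphi'$ extending the precoloring.

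The crucial point is that $u_0 u_{f-1}\in E(G')$ forces $\varphi'(u_0)\cap \varphi'(u_{f-1})=\emptyset$, so extending $\varphi'$ to all of $G$ reduces to $(L^*,b)$-coloring the path $P=u_0 u_1 \cdots u_{f-1}$ with disjoint precolored $b$-sets at its two ends, where $L^*(u_0)=\varphi'(u_0)$, $L^*(u_{f-1})=\varphi'(u_{f-1})$ and $L^*(u_i)=L(u_i)$ for $1\le i\le f-2$. If $a<2\frac{f}{f-1}b$, this follows from Lemma~\ref{lem:Pn+1} applied with $n=f-1$, since $c\le \fsep(C_{g-1},a,b)\le \fsep(C_{f-1},a,b)=\lfloor c(f-1,a,b)\rfloor$ by Lemma~\ref{le:fCn} and Theorem~\ref{thm:fCn}. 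The hard part will be the complementary regime $a\ge 2\frac{f}{f-1}b$, where Lemma~\ref{lem:Pn+1} is silent; there I would verify the amplitude condition on $P$ directly and invoke the sufficiency theorem of Cropper et al.\ for paths~\cite{CGHHJ}. The disjointness $L^*(u_0)\cap L^*(u_{f-1})=\emptyset$ will be essential: in the worst-case list assignment (middle lists all equal to a common $a$-set containing both endpoint lists), $\Sigma_{1,f}(L^*)\ge fb$ holds as soon as $a\ge 2b$ when $f$ is even and as soon as $a\ge 2\frac{f}{f-1}b$ when $f$ is odd, both of which are met in this regime, while the amplitude on every strict subpath reduces to $a\ge 2b$.
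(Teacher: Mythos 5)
Your proposal follows essentially the same route as the paper: the upper bound by extending a bad list assignment from a shortest cycle with fresh colors, and the lower bound by reducing to $2$-connected blocks via Lemma~\ref{le2connex} and then coloring the inner faces one at a time along the weak dual tree, each new face being a path whose two (adjacent, hence disjointly precolored) endpoints are already colored, handled by Lemma~\ref{lem:Pn+1}; peeling a leaf face by induction is just the reverse of the paper's BFS order. The one place where you genuinely depart from the paper is that you notice Lemma~\ref{lem:Pn+1} is stated only for $a<2\frac{n+1}{n}b$, so for a face of length $f$ with $a\ge 2\frac{f}{f-1}b$ (which can occur once $f>g$, e.g.\ $g=5$, $f$ large, $a$ slightly above $2b$) the lemma does not apply; the paper's proof invokes it there without comment, so you have in fact isolated a case the paper leaves unaddressed. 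Your treatment of that case is, however, still only a sketch: you compute the amplitude sum for one configuration (all middle lists drawn from a common $a$-set containing both endpoint $b$-sets) and assert it is the worst case, but you neither prove extremality (one must bound $\sum_k\alpha(H,L,k)$ from below over all admissible assignments, using that each color of an endpoint list misses the opposite endpoint and exploiting the parity gain $\lceil m/2\rceil\ge m/2$, without which the crude bound $\Sigma\ge\frac12\sum_v|L(v)|$ is insufficient at $a=2b$) nor verify the amplitude condition on all subpaths containing exactly one endpoint. The inequalities you state do check out on the configuration you describe, and Cropper et al.\ does reduce everything to the amplitude condition on a path, so the plan is sound; but to be a complete proof this regime needs the full verification written out.
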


\begin{proof}
 First, we are going to prove that $G$ is $(a,b,c)$-free choosable for $c=\fsep(C_{g-1},a,b)$. By Lemma~\ref{le2connex}, we may suppose that $G$ is 2-connected.  Let $\alpha_1, \alpha_2,\ldots, \alpha_k$ be the inner faces of $G$ and let $r$ be any vertex of any face, say $\alpha_1$.
 Let $L$ be a $c$-separating $a$-list assignment of $G$ such that $|L(r)|=b$. We define an $(L,b)$-coloring of $G$ by coloring the vertices of the faces, following a BFS order on the tree $\mathcal{T}_G$, starting with the face $\alpha_1$. Since $g\ge 5$, by Lemma~\ref{le:fCn}, we have $c=\fsep(C_{g-1},a,b) \le \fsep(\alpha_1,a,b)$, thus there exists an $(L,b)$-coloring of $\alpha_1$. At each step, when coloring the vertices of a face $\alpha_i=(x_1,x_2,\ldots, x_{\ell})$, this face shares an edge with at most one face $\alpha_j$ with already colored vertices. Assume without loss of generalities, that $x_1x_{\ell}\in \alpha_i \cap \alpha_j$. Then we have a path $P=(x_1,x_2,\ldots, x_{\ell})$ of length $\ell$ with precolored endvertices and since $\ell\ge g-1$, we have $c\ge \fsep(C_{\ell,a,b})=\lfloor c(\ell,a,b)\rfloor$. Therefore, by Lemma~\ref{lem:Pn+1}, there exists an $(L,b)$-coloring of $P$. By iterating the process on each face, we obtain an $(L,b)$-coloring of the whole graph $G$, hence proving that $\fsep(G,a,b)\ge c$.

Second, since $G$ contains a cycle of length $g$, then $\fsep(G,a,b)\le \fsep(C_{g},a,b)$.
\end{proof}

Remark that lower bounds for the free-separation number of an outerplanar graph with girth four can be derived using a proof similar with the one  for $g\ge 5$ but the formula will be more complex. For outerplanar graphs of girth $3$, we cannot use Lemma~\ref{lem:Pn+1} anymore since it needs the path $P$ being of length at least $3$.

\section{Concluding remarks}

We have determined the separation and free-separation number of the cycle and the free separation number of cactuses, and only bounds for general outerplanar graphs $G$ of girth $g\ge 5$. For some values of $g,a,b$ the lower and upper bounds of Theorem~\ref{th:outer} are equal, but for some not.
For instance, we have $\fsep(C_4,9,4)=3$ and $\fsep(C_5,9,4)=4$.
We conjecture that for any $a\ge b\ge 1$ and any $g\ge 5$, there exists an outerplanar graph $G$ of girth $g$ such that $\sep(G,a,b)=\fsep(G,a,b)=\fsep(C_{g-1},a,b)$.

The problem seems also hard for other simple graphs such as the complete graph $K_n$.
In~\cite{KTV98}, the assymptotic on the minimum $a$ such that $K_n$ is $(a,1,1)$-choosable is given.
 For $n=3$, $\sep(K_3,a,b)$ and $\fsep(K_3,a,b)$ are given in Theorem~\ref{th:Cn} and Proposition~\ref{prop:fC3}, respectively. For $n=4$ we are able to determine both numbers for any values of $a$ and $b$, but there are many cases in the formulae. For $n=5$ even many more cases have to be considered. We conjecture that $f_n(a/b) =\fsep(K_n,a,b)$ is a piecewise linear function with a number of pieces growing exponentially with $n$.

\end{document}